\documentclass[a4paper, 10pt ]{article}
\usepackage[a4paper,left=1.25cm,right=1.25cm,top=2.5cm,bottom=2.5cm]{geometry}
\usepackage{graphicx}
\usepackage{algorithm}
\usepackage{algpseudocode}[compatible]
\usepackage{authblk}
\usepackage{xcolor}    
\usepackage{comment}
\usepackage{indentfirst,csquotes}
\usepackage{ulem}
\usepackage{booktabs}
\usepackage{array}
\usepackage{longtable}
\usepackage{amssymb,amsthm,amsmath}
\usepackage{xcolor,paralist,hyperref,titlesec,fancyhdr,etoolbox}
\usepackage{mathtools}

\newtheorem{theorem}{Theorem}[]
\newtheorem{definition}{Definition}

\newtheorem{assumption}{Assumption}

\newtheorem{proposition}{Proposition}

\usepackage{lipsum}

\begin{document}
\title{A Quadratic Order Reduction - Gaussian Process Ordinary Differential Equation framework for the inference of Large Continuous Dynamical Systems} %%%%%%%%%%%%
\author[1]{Guglielmo Padula\footnote{gpadula@sissa.it}}
\author[2]{Michele Girfoglio\footnote{michele.girfoglio@unipa.it}}
\author[1]{Gianluigi Rozza\footnote{grozza@sissa.it}}
\affil[1]{mathLab, Mathematics Area, SISSA, via Bonomea 265, I-34136 Trieste,
Italy}
\affil[2]{Department of Engineering, University of Palermo, viale delle Scienze 7, I- 90128 Palermo, Italy}

\date{\today}
\maketitle

\begin{abstract}
Forecasting the evolution of complex dynamical systems remains a fundamentally challenging task, primarily due to pronounced nonlinear interactions, high-dimensional state spaces, and the concomitant requirement for rigorous and reliable uncertainty quantification. Contemporary reduced-order modelling (ROM) frameworks frequently exhibit inherent trade-offs among predictive accuracy, numerical stability, and interpretability, and thus often fail to achieve an optimal balance among these competing objectives. To address these limitations, we propose a framework for forecasting complex dynamical systems via a kernel autonomous ordinary differential equation approach based on Gaussian Processes and Quadratic Order Model Reduction. Our base method, the Gaussian Process Ordinary Differential Equations model, allows accurate short-term forecasting with uncertainty quantification, and it provably converges to the real autonomous equation in the smooth case. We integrate it with quadratic order reduced-order modelling and sphere projection for learning the latent dynamics efficiently while preserving stability. 
Numerical experiments demonstrate that our full model outperforms ROM forecasting methods such as Extended Dynamic Mode Decomposition, Bagging Optimised Dynamic Mode Decomposition and Linear and Nonlinear Disambiguation Optimisation in terms of accuracy or computational costs. These results demonstrate the potential of the framework as a robust and stable tool for forecasting complex dynamical systems with rigorous uncertainty quantification.
\end{abstract}
\clearpage
\section{Introduction}
The goal of this paper is to provide a comprehensive and rigorous framework in theory and computation for the development of Digital Shadows \cite{kritzinger_digital_2018} that will allow for efficient and reliable simulations of bounded large-scale dynamical systems. These systems often arise, for example, as solutions to partial differential equations (PDEs) or data-driven representations of complex physical systems such as ocean currents in the Mediterranean Sea or structural loading in engineering infrastructures. Accurate and high-fidelity models of such systems are necessary for preventive decision-making, as they allow for early detection and diagnosis of anomalies and design of proactive interventions. In this framework, Digital Shadows are defined as continuously updated data-driven models that combine sensor data and computational models to provide near real-time forecasts. This capability is particularly critical in applications such as numerical weather prediction, where the provision of accurate and timely forecasts is essential for effective risk assessment and management \cite{brotzge_challenges_2023, merz_impact_2020}. A key challenge in the construction of Digital Shadows lies in the modelling of the underlying dynamics, as it is necessary to balance computational tractability with sufficient expressive power to represent complex, nonlinear interactions in high-dimensional state spaces. Achieving this balance remains a central open problem in scientific computing and engineering across a broad range of engineering applications. \\

Reduced-order Modelling (ROM) techniques constitute a widely adopted class of methods for addressing this problem \cite{tomada_sparse_2025,codega_machine_2025,chen_time_2025,bevanda_koopman-equivariant_2025, gelsomino_comparison_2011}. Approaches based on Proper Orthogonal Decomposition (POD) combined with Galerkin projection \cite{gelsomino_comparison_2011} have been successfully employed for short-term forecasting of large-scale dynamical systems, such as fluid flows and atmospheric processes, frequently in conjunction with machine learning methodologies \cite{khamlich_optimal_2025,rojas_reduced-order_2021,xie_non-intrusive_2019,farenga_latent_2025,daniel_model_2020,san_artificial_2019}. Within this class, Dynamic Mode Decomposition (DMD) and its variants provide notable computational benefits by approximating the system dynamics on a low-dimensional linear subspace \cite{williams_datadriven_2015, sashidhar_bagging_2022,schmid_dynamic_2010,  andreuzzi_dynamic_2023}, with established theoretical convergence guarantees \cite{korda_convergence_2018}. Nonetheless, ROMs can be adversely affected by stability deterioration and the accumulation of numerical errors, particularly when applied to nonlinear dynamical systems or in settings characterised by pronounced parametric variability  \cite{brunton_discovering_2016, grimberg_stability_2020}. These limitations have motivated the development of alternative methodologies, such as Sparse Identification of Nonlinear Dynamics (SINDy) \cite{brunton_discovering_2016} and physics-informed modelling frameworks \cite{baddoo_physics-informed_2023,kaptanoglu_promoting_2021}, which aim to alleviate these issues by explicitly incorporating nonlinear effects and/or relevant physical prior knowledge.\\

To better capture nonlinear dynamics, kernel-based methods have emerged as a promising alternative \cite{baddoo_kernel_2022,pia_surrogate_2025, ensinger_exact_2024, heinonen_learning_2018}. These approaches can implicitly represent complex functions through kernel expansions, enabling flexible modelling in high-dimensional settings. In the context of dynamical systems, kernel-based techniques such as the Linear and Nonlinear Disambiguation Optimisation (LANDO) algorithm have demonstrated the ability to recover both linear and nonlinear components of the dynamics, thus improving robustness while maintaining interpretability \cite{baddoo_kernel_2022}. More broadly, kernel methods enjoy strong theoretical properties: under suitable conditions, they possess universal approximation capabilities, both in the framework of Kernel Ridge Regression \cite{maddalena_deterministic_2021} and Gaussian Processes \cite{lederer_uniform_2021}, with successful applications in forecasting and control \cite{ensinger_exact_2024, maddalena_deterministic_2021, lederer_uniform_2021, capone_gaussian_2022, steinwart_consistency_2009, tuo_improved_2020, wendland_scattered_2004}. These properties make them particularly well-suited for modelling nonlinear dynamical systems in a data-driven setting.\\

Building on these ideas, we introduce the proposed Gaussian Process Ordinary Differential Equation (GPRODE) framework. The central idea is to approximate the drift term of an autonomous ordinary differential equation (ODE) using a Gaussian Process (GP) \cite{rasmussen_gaussian_2005}. Under suitable conditions, the GP posterior mean converges to the true drift function while the posterior variance vanishes as the amount of data increases. As a consequence, the stochastic model converges to the underlying deterministic ODE in the infinite-data limit. Notably, the proposed approach performs inference directly on the drift term, without requiring the application of a numerical integration scheme during training, enabling the decoupling between the numerical schemes used in training and testing. \\

The proposed methodology is related to several existing approaches. In particular, \cite{ensinger_exact_2024} also considers GP-based modelling of dynamical systems, but relies on numerical discretisation of the ODE, whereas our approach operates directly on the continuous-time drift. Similarly, \cite{heinonen_learning_2018} employs Gaussian Processes to learn the drift function using inducing-point approximations and variational inference, but does not incorporate spatial reduction. The strategy in \cite{pia_surrogate_2025} combines Gaussian Processes with latent-space modelling, but uses a linear reduction, whereas we adopt a quadratic reduced-order model. Moreover, in our setting, the derivatives must be estimated from data, while in their framework they are directly available from the reduced model.\\

Despite their flexibility, kernel-based autoregressive models may exhibit instability and often require ad hoc regularisation strategies \cite{xiao_learning_2020}. In the GPRODE setting, this instability arises from the unconstrained nature of the Gaussian Process posterior, which provides no guarantee that the learnt drift will generate bounded trajectories over long-time horizons. To address this issue, we propose an extension of the GPRODE framework. Specifically, we first construct a quadratic reduced-order model, in the spirit of \cite{geelen_operator_2023}, then project the resulting dynamics onto the unit sphere to enforce boundness, and finally apply the GPRODE formulation in the latent space. We refer to this approach as Quadratically Reduced Gaussian Process Ordinary Differential Equations (QGPRODE). We evaluate the proposed methods against several established baselines, including Extended Dynamic Mode Decomposition (EDMD) \cite{williams_datadriven_2015}, Bagging Optimised Dynamic Mode Decomposition (BOPDMD) \cite{sashidhar_bagging_2022}, and LANDO, showing improved accuracy or computational time.\\

The remainder of the paper is organised as follows. Section 2 introduces the GPRODE and QGPRODE methodologies, while Section 3 presents numerical experiments, and Section 4 concludes the paper.

Given the large number of variables employed throughout this paper, we provide here a notation table.

\begin{longtable}{@{} >{\(}l<{\)} p{9cm} @{}}
\caption{Summary of notation.}
\label{tab:notation} \\
\toprule
\multicolumn{1}{l}{\textbf{Symbol}} & \textbf{Description} \\
\midrule
\endfirsthead
\multicolumn{2}{c}{\tablename~\ref{tab:notation} (continued)} \\
\toprule
\multicolumn{1}{l}{\textbf{Symbol}} & \textbf{Description} \\
\midrule
\endhead
\bottomrule
\endfoot

\multicolumn{2}{@{}l}{\textit{Data and state}} \\[2pt]
X(t) \in \mathbb{R}^{m}      & State vector of the dynamical system at time $t$ \\
X_{\mathrm{train}}            & Training dataset $\{X(t_1),\ldots,X(t_N)\}$ \\
m                             & State space dimension \\
N                             & Number of training samples \\
B                             & Closure of the trajectory set; assumed compact in $\mathbb{R}^m$ \\
U                             & Bounded, convex, open set with $B\subset U$ \\
h_{B,B_N}                     & Fill distance $\max_{x\in B}\min_{y\in B_N}\|x-y\|_2$ \\[6pt]

\multicolumn{2}{@{}l}{\textit{Dynamics and operators}} \\[2pt]
f:\mathbb{R}^m\to\mathbb{R}^m & True drift function of the autonomous ODE $\dot{X}=f(X)$ \\
L                           & Lipschitz constant of $f$ \\
M                             & Prediction horizon  \\
\Psi                          & Finite-difference operator approximating $\dot{X}(t_i)$ \\
\Phi                          & Numerical time-stepping function (e.g.\ RK45, symplectic) \\
\Delta t                      & Time step size for numerical integration \\

\multicolumn{2}{@{}l}{\textit{Gaussian process and kernel}} \\[2pt]
K(x,y)                        & Matérn kernel $k(\|x-y\|_2)=e^{-d}(1+d)$ \\
\mathrm{RKHS}(U)              & Reproducing Kernel Hilbert Space on $U$ induced by $K$; norm-equivalent to $H^{(m+3)/2}(U)$ \\
\mu(x)                        & GP posterior mean function; approximates $f$ \\
\sigma(x)                     & GP posterior standard deviation \\
\hat{\sigma}_N                & $\sup_{x\in B}\sigma_N(x)$; uniform bound on posterior std \\
b(x)                          & Lipschitz-corrected diffusion coefficient derived from $\sigma(x)$ \\
\tilde{f}(x)\sim\mathcal{N}(\mu(x),\sigma(x)^2 I_m) & GP surrogate for the drift function $f$ \\[6pt]

\multicolumn{2}{@{}l}{\textit{GPRODE model quantities}} \\[2pt]
E(t)                          & Predicted mean trajectory $\mathbb{E}_\omega[X(t,\omega)]$ \\
V(t)                          & Scalar variance proxy; $\Sigma(t)\approx V(t)I_m$ \\
\Sigma(t)                     & Approximate covariance matrix $\mathrm{Cov}_\omega(X(t,\omega))$ \\
\bar{V}                       & Threshold (saturation) variance \\
\gamma(x)                     & Saturation function $x/(x+0.001)$ enforcing $V\le\bar{V}$ \\
A(t)                          & Augmented state $[E(t)^\top, V(t)]^\top$ \\
g(A(t))                       & Right-hand side of the augmented ODE system \\[6pt]

\multicolumn{2}{@{}l}{\textit{QGPRODE --- reduction and latent space}} \\[2pt]
k                             & Latent (reduced) dimension \\
Z(t)\in\mathbb{S}^k           & Latent state on the unit sphere \\
Z_{\mathrm{train}}            & Latent codes of the training data \\
R(t)                          & Reduced coordinates before sphere projection \\
Q                             & Matrix such that $M^\star=Q^\top Q$; used for sphere projection \\
\bar{Q}                       & Low-rank approximation of $L$ \\
W_1\in\mathbb{R}^{m\times(k+1)} & Linear decoder matrix; $W_1=HL^+$ \\
\bar{W}_1                     & Low-rank version of $W_1$; $\bar{W}_1=H\bar{L}^+$ \\
W_2\in\mathbb{R}^{m\times(k+1)^2} & Quadratic decoder matrix \\
c                             & Constant offset vector in the quadratic reconstruction \\
\epsilon(t)                   & Reconstruction residual; modelled as white noise with variance $\sigma_\epsilon^2$ \\
v^{\otimes 2}                 & Kronecker self-product $v\otimes v$ \\
C_{ij}                        & Reduced covariance matrix $\sqrt{q_i q_j}\,X(t_i)\cdot X(t_j)$ \\
M^\star                       & Optimal positive definite matrix from sphere-fitting optimisation \\
\bar{E}[F]                    & Discrete time-average quadrature approximation of $(t_N-t_1)^{-1}\int F(t)\,dt$ \\
q_i                           & Quadrature weights (e.g.\ Monte Carlo: $q_i=1/N$) \\[6pt]

\multicolumn{2}{@{}l}{\textit{Moment equations (QGPRODE)}} \\[2pt]
m_k(t)                        & $k$-th centred moment $\mathbb{E}[(\tilde{Z}(t)-\mathbb{E}[\tilde{Z}(t)])^k]$ \\
m_3(t)                        & Third centred moment \\
m_4(t)                        & Fourth centred moment \\[6pt]

\bottomrule
\end{longtable}

\section{Methodology}
\label{sec:2}

We are interested in developing a model that approximates the behaviour of a real-world dynamical system based on observed data

\begin{equation}\label{eq:traindata}
X_{train}=\{X(t_1), \ldots, X(t_N)\}, \quad \text{with} \text{  }  X(t_i) \in \mathbb{R}^m \quad \forall i = 1, \ldots, N.
\end{equation}

In particular, our scope is to estimate the future states of the system under investigation, namely $X(t)$ for times $t > t_N$.
To achieve this objective, we develop two novel classes of ROMs. 

The proposed models aim 
to satisfy the following criteria:
\begin{itemize}
    \item High computational efficiency and rapid inference;
    \item A priori guarantee on computational complexity during both training and testing phases; 
    \item Uncertainty quantification associated with predictive outputs;
    \item Fast model update when new data is available.
\end{itemize}

\subsection{GPRODE model}
In this section, we introduce the definition of the GPRODE model.
We consider a set of training observations $X_{\text{train}}$ as in Eq.~\ref{eq:traindata}, in which the underlying dynamics are assumed to be governed by an ordinary differential equation of the form
\begin{equation}
\dot{X}(t) = f(X(t))
\end{equation}
and we furthermore assume that $X(t)$ is bounded.\\
To construct the model, we require three key components: a kernel function (thus a symmetric positive definite function with two arguments) $K:\mathbb{R}^{m}\times \mathbb{R}^{m} \rightarrow \mathbb{R}$, a finite-difference operator $\Psi$, and a time-stepping function $\Phi$. Common choices for $\Psi$ and $\Phi$ include the forward finite-difference operator and the forward Euler scheme, respectively. We begin by approximating the system dynamics  by means of $\Psi$, obtaining
\[
\dot{X}(t_i) \approx \Psi(X(t_i)).
\]

To simplify the notation, we adopt a vectorised formulation: for a collection of $N$ vectors $a$, each belonging to $ \mathbb{R}^{m}$, the expression $f(a)$ denotes the application of the function $f$ to each element of the set individually. For functions of two arguments, vectorisation is applied independently to each argument. Consequently, $f(X_{\text{train}})$ yields an $N \times m$ matrix, while $K(X_{\text{train}}, X_{\text{train}})$ produces an $N \times N$ matrix.

With this notation in place, we construct a reduced basis approximation of $f$, denoted by $\tilde{f}$, using Gaussian Process Regression \cite{rasmussen_gaussian_2005}, that is \begin{equation}
\tilde{f}(x)\sim \mathcal{N}(\mu(x),\sigma(x)^{2}I_{m}),
\end{equation}
 where
\begin{equation}
{\mu}(x)=K(x,X_{train})K(X_{train},X_{train})^{+}\Psi(X_{train})\in \mathbb{R}^{m},
\end{equation}
and
\begin{equation}
\sigma(x)=\sqrt{K(x,x)-K(x,X_{train})K(X_{train},X_{train})^{+}K(X_{train},x)}\in \mathbb{R},
\end{equation}
denote the mean and variance functions of the GP. We assume independent GPs for each component of 
$\Psi$, sharing the same kernel. The use of the pseudoinverse \(K(X_{\text{train}}, X_{\text{train}})^{+}\) is introduced to ensure that the resulting expression is well-defined, which may not occur in the presence of repeated inputs \cite{mohammadi_analytic_2017}. \\
By plugging in $\tilde{f}$ in place of $f$, we obtain the Random Differential Equation
\begin{equation}
        \dot{X}(t,\omega)=\tilde{f}(X(t,\omega),\omega).
    \end{equation}
However, the previous expression is expensive to simulate. In order to solve this issue, we
linearize $\tilde{f}$ around $E(t)=\mathbb{E}_{\omega}[X(t,\omega)]\in \mathbb{R}^{m}$ (assuming fluctuations $X(t)-E(t)$ are small), obtaining
\begin{equation}
        \dot{X}(t,\omega)=\tilde{f}(E(t),\omega)+\nabla_{x} \tilde{f}(E(t),\omega)(X(t)-E(t)).
    \end{equation}

Taking expectation and variance on both sides and assuming uncorrelation between $\nabla_{x}\tilde{f}(E(t))$ with $X(t)$ results in
\begin{equation}
\begin{cases}
\dot{E}(t) = \mu(E(t)),\\
\dot{\Sigma}(t) = \nabla_{x}\mu(E(t)) \Sigma(t)+\Sigma(t) \nabla_{x}\mu(E(t))^{\top}+\sigma^{2}(E(t))I_{m},
\end{cases}
\end{equation}

where $\Sigma(t)$ is an approximation of $\operatorname{Cov}_{\omega}(X(t,\omega))$.
A further simplification, obtained by neglecting derivatives, leads to

\begin{equation}
\begin{cases}
\dot{E}(t) = \mu(E(t)), \\
\dot{V} = \sigma^{2}(E(t))
\end{cases}
\end{equation}
where $V$ is a scalar such that $\Sigma=VI_{m}$. We apply this additional assumption, as while the Gaussian posterior mean converges to the true function under suitable hypotheses (specified in Appendix A), this may not hold for its derivative. This additional approximation also lowers the memory required to store $\Sigma(t)$ from $m^{2}$ to $1$, which is crucial since we are dealing with large-scale dynamical systems. \\
We note that the above formulation can also be obtained by doing a 0th-order approximation around the mean of the Stochastic Differential Equation 
\begin{equation}
    \begin{cases} 
d\tilde{X}=\mu(\tilde{X}(t))dt+\sigma(\tilde{X}(t))I_{m}dW & t\in (t_{N},t_{N}+M],\\
\tilde{X}(t_{N})={X}(t_{N}).
\end{cases}
\end{equation}

We note that while $\mu(x)$ is a Lipschitz function, $\sigma(x)$ is not globally Lipschitz as a result of the presence of the square root term. However, since we intend to invoke the Lipschitz condition in order to ensure uniqueness of the solution to the SDE, we introduce a slightly modified equation in which the diffusion coefficient \(\sigma\) is replaced by a Lipschitz continuous approximation \(b \in C^{1}(U)\) defined by
\[
    b(x)=
    \begin{cases}
    \sigma(x), & \sigma(x) \ge 1,\\[4pt]
    \dfrac{2 \sigma(x)^{2}}{1+\sigma(x)^{2}}, & \sigma(x)<1,
    \end{cases}
\]
which yields the stochastic differential equation
\[
\begin{cases} 
d\tilde{X}(t)=\mu(\tilde{X}(t))\,dt + b(\tilde{X}(t)) I_{m}\,dW(t), & t\in (t_{N},t_{N}+M],\\
\tilde{X}(t_{N}) = {X}(t_{N}).
\end{cases}
\]

Since $b$ is strictly positive, the function $V(t)$ is strictly increasing in time. Moreover, as the system under consideration is bounded, there exists a threshold variance $\bar{V}\in \mathbb{R}$ beyond which further increases in variance do not yield additional information. Consequently, we introduce the following modification to enforce the constraint $V(t)\le \bar{V}$:
\begin{equation}\label{eq:gprode}
\begin{cases}
\dot{E}(t)=\mu(E(t)), & t\in (t_{N},t_{N}+M],\\[4pt]
\dot{V}(t)=b(E(t))^{2}\,\gamma\!\left(\dfrac{\bar{V}-V(t)}{\bar{V}}\right), & t\in (t_{N},t_{N}+M],\\[4pt]
E(t_{N})=X(t_{N}),\\[2pt]
V(t_{N})=0,
\end{cases}
\end{equation}
where
\begin{equation*}
\gamma(x)=\frac{x}{x+0.001}
\end{equation*}
which enforces saturation as $V\rightarrow \bar{V}$.
A choice for the threshold variance can be the ergodic variance, provided the system is ergodic. The selection of $\gamma$ is not restrictive and can be changed with any decreasing function depending only on $V$ that has a zero in $\bar{V}$.
Now we want to write the modified ODE system in a more readable form.
Let us consider 
\begin{equation}
\begin{gathered}
{A}(t)=\left[\begin{array}{c}
{E}(t) \\
{V}(t)
\end{array}\right], \\
A_{0}=\left[\begin{array}{c}
X(t_{N}) \\
0
\end{array}\right], \\
{g}\left({A}(t)\right)=\left[\begin{array}{c}
{\mu}\left({E}(t)\right) \\
b^2\left({E}(t)\right) \gamma\left(\frac{\bar{V}-{V}(t)}{\bar{V}}\right)
\end{array}\right].
\end{gathered}
\end{equation}

This allows us to obtain the following expression
$$
\left\{\begin{array}{l}
{A}(t_{N})=A_{0}, \\
\dot{{A}}={g}\left({A}\right) .
\end{array}\right.
$$

In Appendix A, we prove that all equations considered converge to the underlying ODE under suitable assumptions on the data and on $f$. In particular, we assume that the data is infinitely dense, that f is smooth enough, and that the trajectory is bounded.

To numerically solve the ODE above, we employ the time-stepping operator $\Phi$ with step size $\Delta t.$
The resulting discrete formulation is
\begin{equation}
A_{k}=\Phi\left(A_{k-1}, {g}, \Delta {t}\right),
\end{equation}
where $\Phi(A_{k-1}, {g}, \Delta {t})$ advances the solution by one time step from $A_{k-1}$, using $g$ as the right-hand side of the ODE over the interval of length $\Delta t$.

We summarise the procedure in Algorithm \ref{alg:gprsde}.

\begin{algorithm}[H]
\caption{GPRODE Training and Prediction}
\label{alg:gprsde}
\begin{algorithmic}[1]
\State \textbf{Input:} Training data $\{X(t_1), \ldots, X(t_N)\}$, kernel $K$, prediction horizon $M$, target variance $\bar{V}$, time step $\Delta t$, finite-difference operator $\Psi$, time-stepping function $\Phi$.
\State \textbf{Output:} Mean trajectory $E(t)$, variance $V(t)$ for $t \in [t_N, t_N + M].$
\State // \textit{Step 1: Train Gaussian process on approximated derivatives}
\State Assemble the Gram Matrix $\bar{K}_{ij}=K(X(t_i),X(t_j))$ 
\State Define mean function: $\mu(x) = K(x, X_{\text{train}})\bar{K}^{-1}\Psi(X_{train})$
\State Define variance function: $\sigma^2(x) = K(x,x) - K(x, X_{\text{train}})\bar{K}^{+}K(X_{\text{train}}, x)$
\State Define corrected variance function $b(x)=\begin{cases}
    \sigma(x) & \sigma(x) \ge 1\\
    \frac{2 \sigma(x)^{2}}{1+\sigma(x)^2} & \sigma(x)<1
\end{cases}$
\State // \textit{Step 2: Propagate mean and variance forward}
\State Initialize: $E(t_{N}) = X(t_N)$, $V(t_N) = 0$

\State Assemble
\begin{align*}
\begin{gathered}
{A}(t)=\left[\begin{array}{c}
{E}(t) \\
{V}(t)
\end{array}\right], \\
A_{0}=\left[\begin{array}{c}
X(t_{N}) \\
0
\end{array}\right], \\
{g}\left({A}(t)\right)=\left[\begin{array}{c}
{\mu}\left({E}(t)\right) \\
b^2\left({E}(t)\right) \gamma\left(\frac{\bar{V}-{V}(t)}{\bar{V}}\right)
\end{array}\right],
\end{gathered}
\end{align*}
where 
$\gamma(x)=\frac{x}{x+0.001}$

\State Solve coupled ODEs using numerical integrator $\Phi$:

\For{each $i$ from 1 to $\lceil{\frac{M}{\Delta t}}\rceil$}
\State $A_{i}=\Phi(A_{i-1},g,\Delta t)$
\EndFor

\State Extract $E_{i},V_{i}$ from $A_{i}$
\State \textbf{Return} $E_{i},V_{i},i=1..\lceil{\frac{M}{\Delta t}}\rceil$
\end{algorithmic}
\end{algorithm}

In Appendix A, we demonstrate that this numerical method is convergent in the limit of infinite data. In the next section, we confirm these theoretical results through numerical experiments.\\
In the context of Digital Shadows, when a new data point becomes available, the earliest element in the training set is removed so that the overall training size remains constant. In this setting, this framework respects the criteria given at the start of Section \ref{sec:2}:
\begin{itemize}
    \item It is computationally efficient, as it scales linearly in the data dimensionality,    
    \item An a priori bound on computational cost is $mN^{2}+N^{3}$ during training and $\lceil \frac{M}{\Delta t}\rceil N^2$ during testing.
    \item The variance $V$ represents the uncertainty of each prediction,
    \item Updating the model is fast, as it is sufficient to retrain it after each data update.
\end{itemize}
Conversely, the principal limitation of this methodology, relative to other Gaussian Process–based ODE frameworks, is that the variance is designed to increase monotonically over time. As a consequence, the model frequently yields uncertainty bounds that are more conservative than warranted. This represents a necessary trade-off to enable high computational efficiency.\    

\subsubsection{Numerical verification}
We now examine the numerical convergence behaviour of the method. In practical applications, the infinite-data regime is unattainable, since Gaussian Processes scale cubically with the number of data points due to the need to solve a dense linear system. As in the appendix, we assume that $B=\overline{\{X(t)\mid t>0\}}$ is a compact subset of $\mathbb{R}^{m}$, and that there exists a bounded, convex, open set $U$ such that $B \subset U$ and $f \in H_{0}^{\frac{m+5}{2}}(U,\mathbb{R}^{m})$.  
We denote by $RKHS(U)$ the Reproducing Kernel Hilbert Space \cite{wendland_scattered_2004} on $U$ induced by the kernel $K$. Choosing $K$ as
\begin{equation}
 K(x,y)=k(||x-y||_{2}),
\end{equation}
where
\begin{equation}
    k(d)=e^{-{d}}(1+d),
\end{equation}
it can be proved that $RKHS(U)$ is norm equivalent to $H^{\frac{3+m}{2}}(U)$ \cite{wendland_scattered_2004}.

Regarding $\Psi$, we construct it as a continuous function, assuming that the training points correspond to $X(0), X(\frac{1}{\sqrt{N}}),...X(\sqrt{N})$
\begin{equation}
    \Psi(x):=\frac{y\left(\frac{1}{\sqrt{N}}, x\right)-x}{\frac{1}{\sqrt{N}}},
\end{equation}
where $y(t, x)$ is the flow of the ODE.
This construction coincides with the forward difference approximation on the training data, while ensuring sufficient regularity for $\Psi \in R K H S([a, b])$ for sufficiently large $N$, as it is a first order approximation of a function in $f \in H_{0}^{\frac{m+5}{2}}(U,\mathbb{R}^{m})$ (as formally proven in Appendix A).
This definition only works because the time is equispaced with time step $\frac{1}{\sqrt{N}}$. In the case of non-uniform temporal discretisation, the construction of $\Psi$ becomes highly nontrivial, since it must depend solely on $x$ and, in addition, possess sufficient regularity to lie in the space $H^{\frac{m+5}{2}}(U,\mathbb{R}^{m})$. An alternative approach is shown in Appendix A.
As $\Phi$ we choose the explicit Euler method with $\Delta t=\frac{1}{\sqrt{N}}$.\\

With these ingredients, it can be proven that the testing error decreases and satisfies the bound (as shown in Appendix A)
\begin{equation}
\begin{gathered}
\left\|X(\sqrt{N}+T)-E_{\lceil \sqrt{N}T \rceil} \right\|_{2} \leq \\\frac{e^{L T}-1}{L}\left[||\Psi||_{RKHS(U)}\left(1-k\left(\frac{1}{\sqrt{N}}\right)\right)+(L \sup\limits_{U}||f(x)||_{2}) \left(\frac{1}{\sqrt{N}}\right)\right],
\end{gathered}
\end{equation}
where  $L=\sup\limits_{x\in U}||\nabla f(x)||_{2}$. Estimating the theoretical bound requires knowledge of $||\Psi||_{RKHS(U)}$, which cannot be computed analytically in the $U$ bounded case. An exception is the case $m=1$, for which $U=[a,b]$ for certain $a$ and $b$, and it holds \cite{saturo_extension_2024}

\begin{equation}
\begin{gathered}
||\Psi||_{RKHS([a,b])}^{2}=
 \frac{1}{4} \left\|  \Psi^{\prime \prime}\right\|_{L^2([a, b])}^2+\frac{1}{2}\left\|\Psi^{\prime}\right\|_{L^2([a, b])}^2 \\ +\frac{1}{4}\|\Psi\|_{L^2([a, b])}^2
 +\frac{1}{2}\left(\Psi(a)^2-\Psi(a) \Psi^{\prime}(a)+\Psi^{\prime}(a)^2\right)\\ +\frac{1}{2}\left(\Psi(b)^2-\Psi(b) \Psi^{\prime}(b)+\Psi^{\prime}(b)^2\right) .
 \end{gathered}
\end{equation}
Thus, for assessing the performance of the numerical method, we restrict to $m=1$, and $U=[a,b].$\\

The kernel mean and variance functions are calculated using the Scikit-Learn library \cite{pedregosa_scikit-learn_2011}, which has ad-hoc adjustments to compute an accurate solution of the linear system, including adding a jitter. $X_train$ is normalised before training the model. This approach for calculating the mean and variance functions will be used throughout the remainder of the paper. Since the kernel is parameter-free, no hyperparameter optimisation is performed, and this convention is likewise followed in the rest of the paper. \\

We study the convergence of two different ODEs, and we compare them with the respective theoretical bounds:
\begin{itemize}
    \item $y'=-y, y(0)=\frac{1}{2}$,
    \item $y'=-y(1-y), y(0)=\frac{1}{2}$.
\end{itemize}
Results are reported in Fig. \ref{fig:lin}. As can be observed, the numerical errors remain within the theoretical error bound in both cases, and the convergence is substantially faster than that predicted by the bound.

\begin{figure}
\centering
\includegraphics[width=1.0\textwidth]{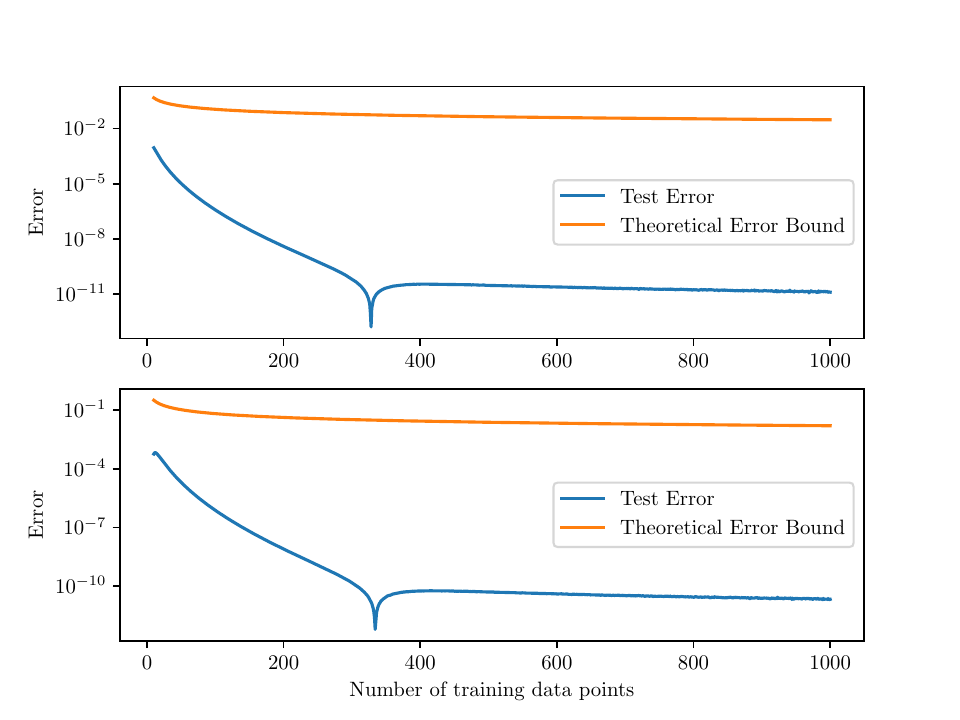}
\caption{Top: theoretical error bound and numerical error for $y'=-y$. Bottom: theoretical error bound and numerical error for $y'=-y(1-y)$. There is a cusp due to numerical instability caused by the growing condition number of the kernel Gram matrix.}
\label{fig:lin}    
\end{figure}

\subsection{QGPRODE}
In Appendix A, it is proved that the GPRODE model converges in the limit of infinite data.\\
Nevertheless, we observe that, in general, the simulated trajectory may exhibit non-physical behaviour due to over-fitting (this is particularly relevant because $N$ must remain small, given the $O(N^{3})$ scaling in the computation of $\mu$). This issue can be addressed by constructing a reduced-order model in space.\\
We will address this problem by projecting the equation onto a $k$-dimensional unit sphere and thus performing dimensionality reduction.
Motivated by the work on Quadratic Model Order Reduction \cite{geelen_operator_2023}, our aim is to obtain a decomposition of the form\begin{equation}
X(t)=c+W_{1}Z(t)+W_{2} \operatorname{vec}(Z(t)\otimes Z(t))+\epsilon(t),
\end{equation}
where $Z(t)\in \mathbb{S}^{k}$ is low-dimensional, $W_{1}\in R^{m\times (k+1)}$ and $W_{2}\in R^{m\times (k+1)^{2}}$ are matrices, $c$ is a constant vector, and $\epsilon(t)$ is a residual.\\
We first introduce
\[
\bar{E}[F]: C([t_{1},t_{n}],\mathbb{R}^{k})\rightarrow \mathbb{R}^{k}
\]
by setting
\begin{equation}
\bar{E}[F]=\frac{1}{t_{N}-t_{1}}\Xi[F(\cdot),t_{1},t_{N}],
\end{equation}
where \(\Xi\) denotes a quadrature rule that has \(X_{\text{train}}\) as quadrature points, and is used to approximate
\begin{equation}
\int_{t_{1}}^{t_{N}}F(t)dt.
\end{equation}

Then we can compute $c_{1}=\bar{E}[X(\cdot)]$ and 
\[
C_{1}=\bar{E}\big[(X(\cdot)-c_1)\otimes(X(\cdot)-c_1)\big]\in \mathbb{R}^{m\times m}.
\]
Since $C_{1}$ is symmetric and positive semidefinite, we can write $C_{1}=HH^{T}$ thanks to the Singular Value Decomposition (SVD). $H$ is the product of the matrix with columns containing the eigenvectors by the square root of the corresponding eigenvalues, sorted in descending order.  \\
Next, we define $R(t)$ as the vector consisting of the first $k+1$ components of $H^{+}(X(t)-c_{1})$.
The problem of this approach is that $C_{1}$ cannot be stored in memory due to $m$ being very large.\\
So instead we compute the reduced matrix $C_{ij} = \sqrt{q_{i}q_{j}}X(t_{i})\cdot X(t_{j})$ and $R(t_i)$ is the $i$-th left eigenvector of $C$ truncated at index $k+1$ via randomised SVD. The $q_{i}$ are the quadrature weights of $\Xi$.\\
We then solve the optimisation problem
\[
M^{\star}=\min_{M \in \mathbb{R}^{k+1\times k+1},\, M>0 } \bar{E}\big[(R(\cdot )^{T}MR(\cdot)-1)^{2}\big]
+ a\big(\lambda_{\max}(M)-\lambda_{\min}(M)\big),
\]
where $a$ is a regularisation parameter.\\
Subsequently, we use the spectral decomposition to find $Q$ such that $M^{\star}=Q^{T}Q$.\\
The latent quantity $QR(t)$ will then lie approximately in $\mathbb{S}^{k}$; to enforce that it lies on $\mathbb{S}^{k}$, we set
\begin{equation}
Z(t)=\frac{QR(t)}{\|QR(t)\|}.
\end{equation}
We also store a low-rank approximation of $Q$, denoted by $\bar{Q}$.
We then set $W_{1}=HQ^{+}$ and $\bar{W}_{1}=H\bar{Q}^{+}$.
Next, we obtain $W_{2}$ by solving
\begin{equation}
\min_{W_{2}}\,\bar{E}[\bigl\|X(\cdot)-W_{1}Z(\cdot)-W_{2}\bigl(\operatorname{vec}(Z(\cdot)^{\otimes 2})\bigr)\bigr\|^{2}].
\end{equation}

where
\begin{equation}
v^{\otimes 2}=v\otimes v.
\end{equation}
If $k^{2}$ is large, $Z(t)$ is truncated only in the quadratic term.\\
We then define
\begin{equation}
    c=\bar{E}[X(\cdot)]-W_{1}\bar{E}[Z(\cdot)]-W_{2}\bar{E}[\operatorname{vec}(Z(\cdot)^{\otimes 2})].
\end{equation}

This yields the representation
\begin{equation}\label{eq:qogprsde}
X(t)=c+W_{1}Z(t)+W_{2}\bigl(\operatorname{vec}(Z(t)^{\otimes 2})\bigr)+\epsilon(t).
\end{equation}
By construction, it holds that 
\begin{equation}
\bar{E}[\epsilon(\cdot)]=0.
\end{equation}

A natural choice to model $\epsilon(t)$ is as a white noise process of variance 

$\bar{E}[\epsilon(\cdot)^{2}]$.

During inference, $\tilde{Z}(t)$ is predicted using the methodology of the previous section: i.e., defining
\begin{equation}
{\mu}(z)=K(z,Z_{train})K(Z_{train},Z_{train})^{+}\Psi(Z_{train}),
\end{equation}
\begin{equation}
\sigma(z)=\sqrt{K(z,z)-K(z,Z_{train})(K(Z_{train},Z_{train}))^{+}K(Z_{train},z)}.
\end{equation}
\begin{equation}
    b(z)=
    \begin{cases}
    \sigma(z) & \sigma(z) \ge 1,\\
    \frac{2 \sigma(z)^{2}}{1+\sigma(z)^{2}} & \sigma(z)<1,
\end{cases}   
\end{equation}
where $Z_{train}$ are the latent codes of $X_{train}$.\\
The values $E(t)=\mathbb{E}[\tilde{Z}(t)]$ and $V(t)=\mathbb{E}[(\tilde{Z}(t)-\mathbb{E}[\tilde{Z}(t)])^{2}]$ are computed with
\begin{equation}
\begin{cases}
\dot{{E}}(t)=\mu({E}(t))(I-\frac{E(t)E(t)^{T}}{||E(t)||_{2}^{2}}) & t\in (t_{N},t_{N}+M],\\ 
\dot{V}(t)=b({E}(t))^{2}\gamma\left(\frac{\bar{V}-V(t)}{\bar{V}}\right) & t\in (t_{N},t_{N}+M],\\ 
{E}(t_{N})=Z(t_{N}),\\
{V}(t_{N})=0.\\
\end{cases}
\end{equation}
With respect to Eq. \ref{eq:gprode}, the equation for $E(t)$ is modified in order to keep $E(t)$ on the unit sphere, thus enforcing boundness. To guarantee that the numerical solution remains confined to the unit sphere, we additionally employ a symplectic integrator as the numerical integration scheme.

The distribution of $\tilde{X}(t)$ is then computed using
\begin{equation}
\tilde{X}(t)=c+W_{1}\tilde{Z}(t)+W_{2}\operatorname{vec}(\tilde{Z}(t)^{\otimes 2})+\epsilon(t).
\end{equation}

In order to obtain the mean and variance for the corresponding predictor of $\tilde{X}(t)$, due to the presence of $\tilde{Z}(t) \otimes \tilde{Z}(t)$, we also need the third and fourth centred moments of the random variable given by the equation
\begin{equation}    
\begin{cases} 
d\tilde{X}=\mu(\tilde{X}(t))dt+b(\tilde{X}(t))I_{m}dW & t\in (t_{N},t_{N}+M],\\
\tilde{X}(0)=\tilde{X}(t_{N}).
\end{cases}
\end{equation}

By assuming a 0-order approximation for $\mu_{N}$ and $\sigma_{N}$ around $E[\tilde{Z}(t)]$, the moments can be approximated as 
\begin{equation}
\frac{dm_{3}}{dt}=3V(t)\mu(E(t)),
\end{equation}
\begin{equation}
\frac{dm_{4}}{dt}=4m_{3}(t)\mu(E(t))+6 V(t)b^{2}(E(t)),
\end{equation}
where  $m_{k}(t)=E[(\tilde{Z}(t)-E[\tilde{Z}(t)])^{k}]$.
When computing the variance of $\tilde{X}(t)$, we use the low-rank version of $W_{1}$, $\bar{W}_{1}$, to avoid variance explosion due to the fact that $L$ may have eigenvalues near 0, which could potentially cause numerical problems.\\
We modify the equation to make sure that the fourth moment is non-negative and bounded in the following way
\begin{equation}
\frac{dm_{4}}{dt}=(\max(4m_{3}(t)\mu(E(t)),0)+6 V(t)b^{2}(E(t)))\odot
\gamma\left(\frac{\bar{V}^{2}-m_{4}}{\bar{V}^{2}}\right). 
\end{equation}

As $\bar{V}$ we adopt the ergodic variance 
\begin{equation}
\bar{E}[(Z(\cdot)-\bar{E}[Z(\cdot)])^{2}].
\end{equation}
Then $\mathbb{E}[X(t)]$ can be computed as
\begin{equation}
\mathbb{E}[\tilde{X}(t)] = c + W_1 E(t) + W_2(\operatorname{vec}(E(t)^{\otimes 2}+diag(V(t))))
\end{equation}
while
\begin{equation}
\begin{gathered}
Var[\tilde{X}(t)] = \bar{W}_1 {diag}(V(t)) \bar{W}_1^T 
+ W_2 \mathbb{E}[((\operatorname{vec}(\tilde{Z}(t)^{\otimes 2} - E(t)^{\otimes 2}))^{\otimes 2})] W_2^T \\
+ \sigma_{\epsilon}^2 I.
\end{gathered}
\end{equation}
Details on the computation of $[((\operatorname{vec}(\tilde{Z}(t)^{\otimes 2} - E(t)^{\otimes 2}))^{\otimes 2})]$ can be found in Appendix C.
With respect to the GPRODE model, the QGPRODE model is more efficient, as it has to compute a randomised SVD at a cost of $O\left(m n \log (k)+(m+N) k^2\right)$, then $O(N^{2}k)$
for assembling the Gram matrix and solving the linear system, $O(k)$ for predicting, and then $O(mk^{2})$ for predicting, while the GPRODE costs $O(mN^{2})$. 
We summarise the methodology in Algorithm 2. A summary of the algorithm steps is shown in Fig. \ref{fig:workflow}.
\begin{algorithm}[H]
\caption{QGPRODE Training and Prediction}
\label{alg:QGPRODE}
\begin{algorithmic}[1]
\State \textbf{Input:} Training data $\{X(t_1), \ldots, X(t_N)\}$, kernel $K$, prediction horizon $M$, target variance $\bar{V}$, time step $\Delta t$, finite-difference operator $\Psi$, time-stepping symplectic function $\Phi$, latent dimension $k$, regularization parameter $a$, quadratic truncation dimension $k_{\text{quad}}$ (optional). Quadrature weights $q_{1}$...$q_{N}$.
\State \textbf{Output:} Mean trajectory $\mathbb{E}[\tilde{X}(t)]$, variance $\text{Var}[\tilde{X}(t)]$ for $t \in [t_N, t_N + M]$.
\State // \textit{Step 1: Compute quadratic reduced-order model}
\State Define $\bar{E}[f(\cdot)]=\sum_{i=1}^{n} q_{i}f(t_{i})$
\State Compute mean: $c_1 =\bar{E}[(X(\cdot)] $
\State Compute the reduced matrix $C_{ij} = \sqrt{q_{i}q_{j}}X(t_{i})\cdot X(t_{j})$
\State Compute reduced coordinates: $R(t_i)$ is the $i$-th left eigenvector of $C$ truncated at index $k+1$ via Randomized SVD  
\State Solve optimisation problem:
\[
M^{\star} = \arg\min_{M \in \mathbb{R}^{k+1 \times k+1}, M > 0} \bar{E}[(R(\cdot)^T M R(\cdot) - 1)^2] + a(\lambda_{\max}(M) - \lambda_{\min}(M))
\]
\State Compute: $M^{\star} = Q^T Q$ (via spectral decomposition)
\State Compute latent codes: $Z(t_i) = \frac{LR(t_i)}{\|LR(t_i)\|}$ for $i = 1, \ldots, N$
\State Store: $Z_{\text{train}} = \{Z(t_1), \ldots, Z(t_N)\}$
\State // \textit{Step 2: Compute GPRODE on } $Z(t)$
\State Compute low-rank version: $\bar{Q}$ (truncated version of $Q$)
\State Define: $W_1 = HQ^+$ and $\bar{W}_1 = H\bar{Q}^+$
\State Compute $W_2$ by minimizing: $\bar{E}[\|X(\cdot) - W_1 Z(\cdot) - W_2\operatorname{vec}(Z(\cdot)^{\otimes 2}  )\|^2]$
\State \textit{(Optional: If $k^2$ is large, truncate $Z$ to dimension $k_{\text{quad}}$ for quadratic term)}
\State Compute offset: $c = \bar{E}[X(\cdot)] - W_1 \bar{E}[Z(\cdot)] - W_2 \bar{E}[\operatorname{vec}(Z(\cdot)^{\otimes 2})]$
\State Compute noise variance: $\sigma_{\epsilon}^2 = \bar{E}[\|X(\cdot) - c - W_1 Z(\cdot) - W_2(\operatorname{vec}(Z(\cdot)^{\otimes 2}))\|^2]$

\State Compute kernel matrix: $\bar{K}_{ij} = K(Z(t_i), Z(t_j))$ for $i,j = 1, \ldots, N$
\State Define mean function: $\mu(z) = K(z, Z_{\text{train}})\bar{K}^{+}\Psi(Z_{\text{train}})$
\State Define variance function: $\sigma^2(z) = K(z,z) - K(z, Z_{\text{train}})\bar{K}^{+}K(Z_{\text{train}}, z)$
\State Define corrected variance: $b(z) = \begin{cases}
    \sigma(z) & \text{if } \sigma(z) \ge 1\\
    \frac{2\sigma(z)^2}{1 + \sigma(z)^2} & \text{if } \sigma(z) < 1
\end{cases}$
\algstore{myalg}
\end{algorithmic}
\end{algorithm}

\begin{algorithm}[H]         
\begin{algorithmic} [1]                   % enter the algorithmic environment
\algrestore{myalg}
\State Define $\gamma(x) = \frac{x}{x + 0.001}$
\State Assemble augmented state and dynamics:
\begin{align*}
A(t) &= [E(t)^T, V(t)^T, m_3(t)^T, m_4(t)^T]^T \\
A_0 &= [Z(t_N)^T, 0, 0, 0]^T \\
g(A(t)) &= \begin{bmatrix}
\mu(E(t))  \left(I - \frac{E(t)E(t)^T}{\|E(t)\|_2^2}\right) \\
b^2(E(t)) \gamma\left(\frac{\bar{V} - V(t)}{\bar{V}}\right) \\
3V(t) \odot \mu(E(t)) \\
(\max(4m_3(t) \odot \mu(E(t)), 0) + 6V(t) \odot b^2(E(t))) \odot \gamma\left(\frac{\bar{V}^2 - m_4(t)}{\bar{V}^2}\right)
\end{bmatrix}
\end{align*}
where $\odot$ denotes element-wise multiplication

\For{each $i$ from 1 to $\lceil\frac{M}{\Delta t}\rceil$}
\State $A_i = \Phi(A_{i-1}, g, \Delta t)$
\State Extract: $E_i$, $V_i$, $m_{3,i}$, $m_{4,i}$ from $A_i$
\EndFor
\State
\For{each $i$ from 1 to $\lceil\frac{M}{\Delta t}\rceil$}
\State Compute mean: $\mathbb{E}[\tilde{X}(t_N + i\Delta t)] = c + W_1 E_i + W_2\operatorname{vec}(E_i^{\otimes 2}+diag(V_{i}))$
\State Compute variance using low-rank $\bar{W}_1$:
\begin{align*}
\text{Var}[\tilde{X}(t_N + i\Delta t)] = &\bar{W}_1 \text{diag}(V_i) \bar{W}_1^T \\
&+ W_2 \mathbb{E}[(\operatorname{vec}(Z^{\otimes 2} - E_i^{\otimes 2}))^{\otimes 2}] W_2^T \\
&+ \sigma_{\epsilon}^2 I
\end{align*}
where $\mathbb{E}[(\operatorname{vec}(Z^{\otimes 2} - E_i^{\otimes 2}))^{\otimes 2}]$ is computed using $V_i$, $m_{3,i}$, $m_{4,i}$
\EndFor

\State \textbf{Return} $E[\tilde{X}(t)]$ and $\text{Var}[\tilde{X}(t)]$ for $t \in [t_N, t_N + M]$
\end{algorithmic}
\end{algorithm}

\begin{figure}
    \centering
    \includegraphics[width=\linewidth]{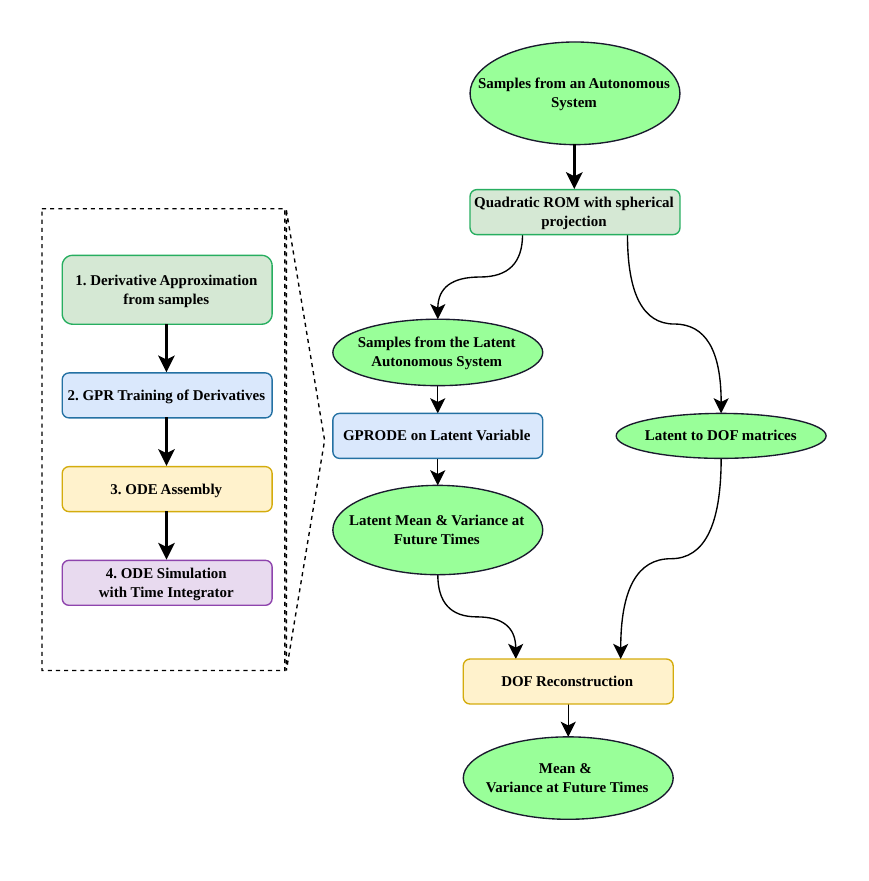}
    \caption{The workflow of the QGPRODE algorithm.}
    \label{fig:workflow}
\end{figure}

\section{Applications}
In this section, we provide numerical experiments to validate our methodology. All of them are performed on a machine with an Intel(R) Xeon(R) Platinum 8260 CPU @ 2.40GHz. Computational times reported are the average of 1000 runs.
Before investigating more advanced test cases, we perform a sanity check for the GPRODE model by considering a Lorenz system, which reads as follows: 
\begin{equation}
\label{eq:Lorenz}
\begin{cases}
\frac{du}{dt}=-\sigma(u-v) & t \in [0,20],\\
\frac{dv}{dt}=\rho u -v -uw & t \in [0,20],\\
\frac{dw}{dt}=-\beta w+u v & t \in [0,20],\\
u(0)=\frac{1}{10},   \\
v(0)=0, \\
w(0)=0, \\
\end{cases}
\end{equation}
with $\sigma=10$, $\beta=\frac{8}{3}$, $\rho=13$.
With these parameters, the system converges to an equilibrium point, as shown in Fig. \ref{fig:0}. As we can observe from Fig. \ref{fig:1}, the GPRODE is able to reproduce the system dynamics. The database consists of $20001$ time steps, of which the first $10000$ are used for training. 
For derivative estimation (even if it is known in this case), we adopt the forward difference operator as $\Psi$. For simulation, we adopt an explicit RK45 scheme as $\Phi$.\\
We compare the GPRODE with  LANDO, BOPDMD and EDMD from the library PyDMD \cite{demo_pydmd_2018,ichinaga_pydmd_2024} with rank 3 and default parameters. We also measure the performance of QGPRODE (with rank 3) for completeness. 
To measure forecasting accuracy, we adopt the scaled L1 and relative L2 error. Let $Y\in \mathbb{R}^{N\times m}$ the true values and $\tilde{Y}\in \mathbb{R}^{N\times m}$ the values as predicted by the model. We define the scaled L1 error as
\begin{equation}
\frac{1}{\max\limits_{i=1\ldots N}\max\limits_{j=1\ldots m} Y_{ij}-\min\limits_{i=1\ldots N}\min\limits_{j=1\ldots m} Y_{ij}}
\frac{1}{mN}\sum\limits_{i=1}^{N}\sum\limits_{j=1}^{m}|Y_{ij}-\tilde{Y}_{ij}|,
\end{equation}
and the relative L2 error as
\begin{equation}
\frac{\sqrt{\sum\limits_{i=1}^{N}\sum\limits_{j=1}^{m}(Y_{ij}-\tilde{Y}_{ij})^{2}}}{\sqrt{\sum\limits_{i=1}^{N}\sum\limits_{j=1}^{m}Y_{ij}^{2}}}.
\end{equation}

As shown in Table \ref{tab:1}, GPRODE is able to outperform existing methodologies, either in terms of accuracy or efficiency. With respect to LANDO, the required time is substantially shorter because LANDO must subsample the data to ensure stability, which is computationally expensive, whereas our approach achieves stability through the use of time integrators.
EDMD is slower than our method as it must first compute two kernel matrices and then a Singular Value Decomposition for truncation. However, because it is trained on a relatively small number of samples, it tends to overfit the training data, which in turn leads to a high test error.  
BOPDMD, being a linear predictor, is less susceptible to overfitting than EDMD and is the fastest because no kernel matrix must be computed. Nevertheless, since the underlying system is nonlinear, it yields a higher error than both LANDO and GPRODE. Because there is no truncation, the latent system of QGPRODE differs from the GPRODE one by a stretch and a rotation; thus, QGPRODE is also exact in the infinite data limit and thus accurate as the hypotheses are satisfied.
\begin{table}[H]
\centering
\begin{tabular}{|l|l|l|l|}
\hline
Type     & scaled L1   & relative L2   & Time (seconds)\\ \hline
EDMD     & 1.21 & 1.29 & 211 \\ \hline
BOPDMD   & 0.45 & 0.48 & \textbf{2} \\ \hline
GPRODE   & \textbf{2.22e-04}  & \textbf{2.23e-04} & 39 \\ \hline
QGPRODE   & {2.31e-04}  & {3.39e-04} & 45 \\ \hline

LANDO    & 2.29e-04 & 3.32e-04 & 314 \\ \hline
\end{tabular}
\caption{Lorentz dynamical system. Scaled L1 and relative L2 error between each predictor and the true value.}
\label{tab:1}
\end{table}

\begin{figure}
\centering
\includegraphics[width=\textwidth]{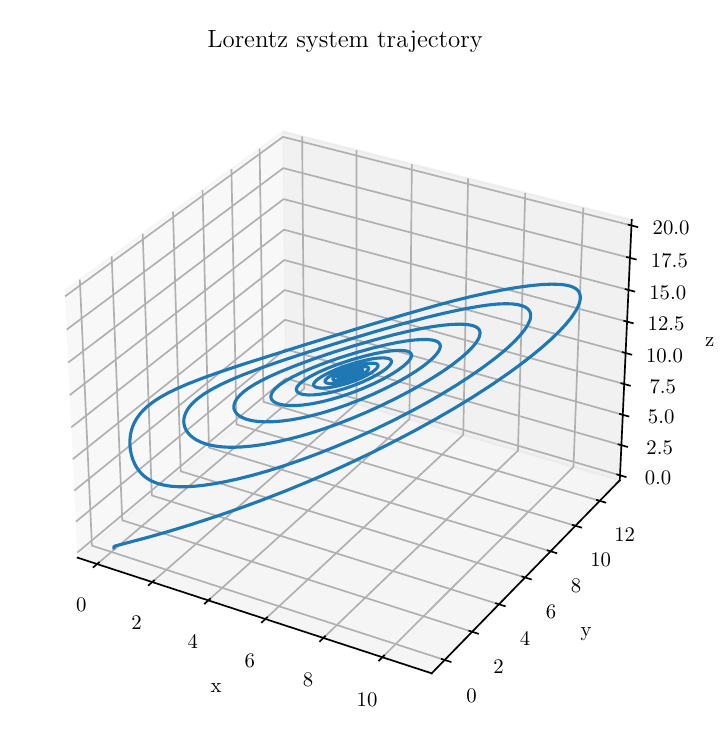}
\caption{Trajectory of the Lorentz system given by Eq. \ref{eq:Lorenz}.}
\label{fig:0}    
\end{figure}

\begin{figure}
\centering
\includegraphics[width=\textwidth]{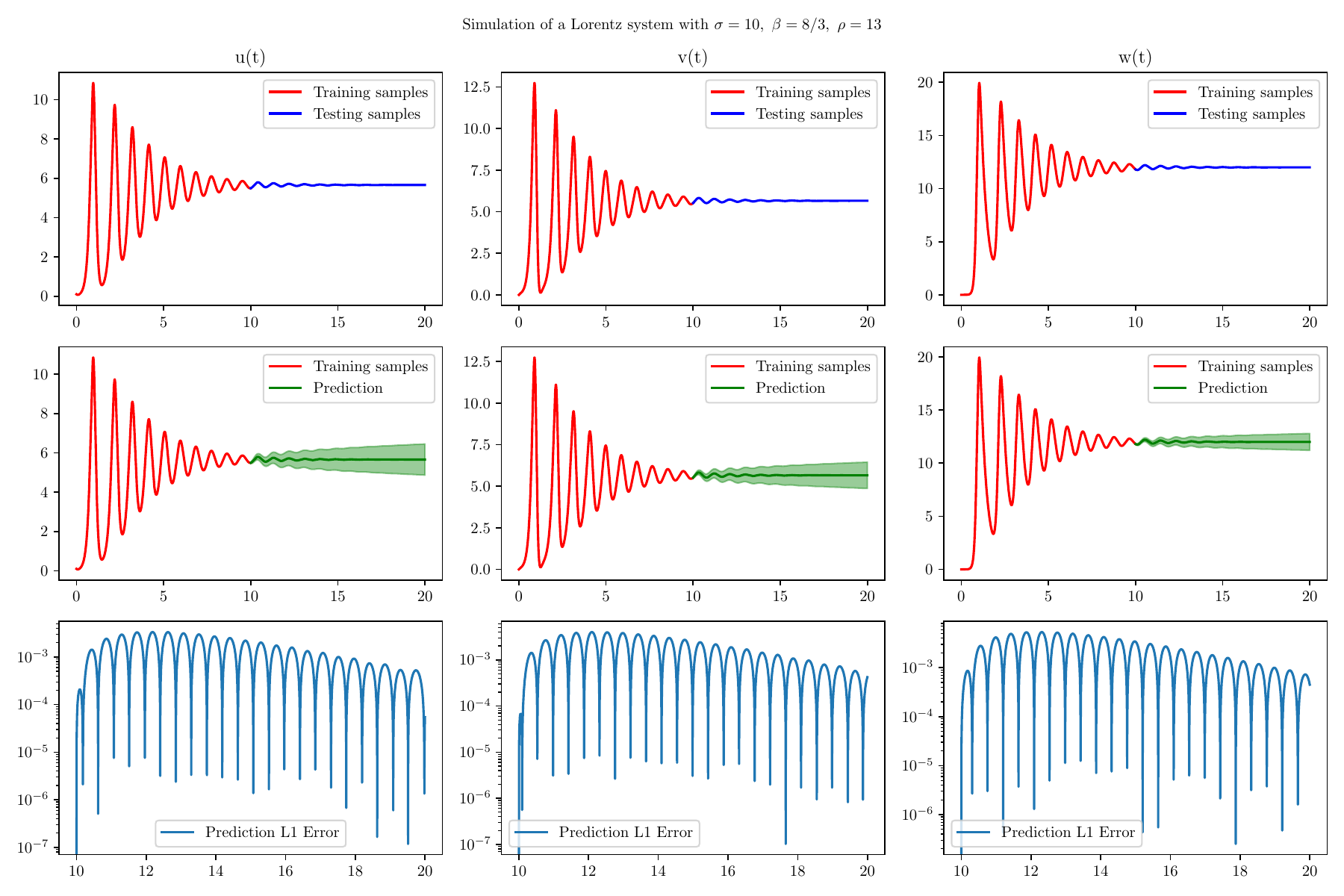}
        \caption{Simulation of a Lorenz system. The error is computed with respect to the mean of the predictions. The bands represent the 95\% confidence interval.}

\label{fig:1}    
\end{figure}

As a sanity check for the QGPRODE, we try to learn the system
\begin{equation}\label{eq:sin}
\left[\begin{array}{c}u(t) \\ v(t) \\ w(t)\end{array}\right]=\left[\begin{array}{ccc}1 & 0.25 & 0.125 \\ -1 & 0.5 & 0 \\ 1 & 0.25 & -0.125\end{array}\right]\left[\begin{array}{c}\sin(t) \\ \cos(t) \\ \sin^2(t)\end{array}\right],
\end{equation}
for time $t \in [0,2\pi]$, which is of interest in our case, as it is quadratic in the latent variable that spans the maximum variable subspace, in alignment with the hypothesis of QGPRODE.
\begin{figure}
\centering
\includegraphics[width=\textwidth]{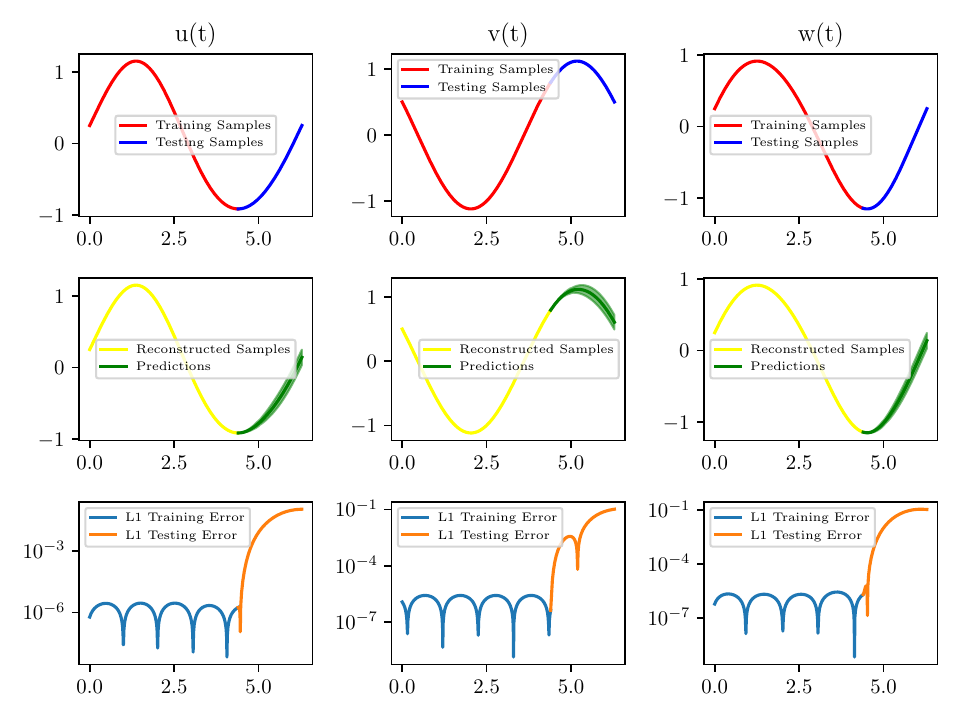}
\caption{Simulation of Eq. \ref{eq:sin}. 
The error is computed with respect to the mean of the predictions. The bands represent the 95\% confidence interval.}
\label{fig:2}    
\end{figure}
We take 1000 time steps and use the first 700 for training. \\
Again, the model is able to learn the trajectory of the system (Fig. \ref{fig:2}).\\
We compare the QGPRODE with LANDO, BOPDMD and EDMD, from the library PyDMD with truncation rank 2 and default parameters. We also measure the performance of GPRODE for completeness. As quadrature weights for QGPRODE, we adopt the Monte Carlo ones, so $q_{i}=\frac{1}{700} \forall i=1...700$. This choice of the quadrature weights will also be adopted for the other test cases.

\begin{table}[h]
\centering

\begin{tabular}{|l|l|l|l|}
\hline
Type     & scaled L1   & relative L2   & Time (seconds) \\ \hline
EDMD     & 0.40 & 1.77 & \textbf{0.2} \\ \hline
BOPDMD   & 0.04 & 0.16 & {0.5} \\ \hline
QGPRODE   & \textbf{0.02}  & \textbf{0.08} & 1.5 \\ \hline
GPRODE   & {0.07}  & {0.29} & 0.3 \\ \hline
LANDO    & 0.42 & 1.77 & 0.68 \\ \hline
\end{tabular}
\caption{Sin-cos-sin$^2$ system. Scaled L1 and relative L2 error between each predictor and the true value.}
\label{tab:2}
\end{table}

As shown in Table \ref{tab:2}, QGPRODE is able to outperform existing methodologies in precision. It is slower because the number of samples is small, and so time integration dominates. As the number of samples is low, both LANDO and EDMD overfit the training data, and LANDO is slower due to the data subsampling. EDMD is the fastest method as it requires neither time integration nor subsampling. However, because it is trained on a relatively small number of samples, it tends to overfit the training data, which in turn leads to a high test error.  BOPDMD, being a linear predictor, is less susceptible to overfitting than EDMD. Nevertheless, it is slower than EDMD because it is an ensemble method and, since the underlying system is near a linear system but not completely linear, it yields a higher error than QGPRODE, but still better than LANDO and EDMD. GPRODE has better results than LANDO and EDMD thanks to the advanced time integration.\\
We have verified that the methodology correctly infers systems that satisfy the hypothesis needed for convergence. We now proceed to investigate the performance of systems in which these assumptions are not fulfilled.\\

\subsection{BV$-\alpha$ model}
First, we try to learn the dynamics of the stream $\psi$ of the variant of the BV-$\alpha$ model introduced in \cite{girfoglio_novel_2023}:
\begin{equation}
\begin{cases}    
\partial_t q+\nabla \cdot((\nabla \times \psi) q)-\frac{1}{\operatorname{Re}} \Delta q=F & \text { in } \Omega \times\left(0, 100\right), \\
-\alpha^2 \nabla \cdot(a(q) \nabla \bar{q})+\bar{q}=q & \text { in } \Omega \times\left(0, 100\right), \\
-\operatorname{Ro} \Delta \psi+y=\bar{q} & \text { in } \Omega \times\left(0, 100\right),
\end{cases}
\end{equation}

 where $q$ is the vorticity, $\bar{q}$ is the filtered vorticity, $a$ is a regularisation function. We refer to \cite{girfoglio_novel_2023} for details of the initial and boundary conditions. 

We test the QGPRODE and the GPRODE methodologies, and we compare them with LANDO, EDMD, and BOPDMD.
Results are summarised in Tab. \ref{tab:3}.

\begin{table}[h]
\centering
\begin{tabular}{|l|l|l|l|}
\hline
Type     & scaled L1   & relative L2   & Time (seconds) \\ \hline
EDMD     & 0.04 & 0.23 & \textbf{0.04} \\ \hline
BOPDMD   & 0.03 & 0.24 & 0.24 \\ \hline
GPRODE   & 0.06 & 0.37 & 0.12 \\ \hline
LANDO    & 0.03 & 0.24 & 0.24 \\ \hline
QGPRODE & \textbf{0.02} & \textbf{0.13} & 0.79 \\ \hline
\end{tabular}
\caption{BV-$\alpha$ test case. Scaled L1 and relative L2 error between each predictor and the true value.}
\label{tab:3}
\end{table}

The temporal snapshots are saved each second, while the simulation is performed with $\Delta t=2.5 \cdot 10^{-4}$. 
The spatial discretisation is $16\times 32$; the first 80 samples are used for training and the last $20$ for testing. Derivatives are computed using a second-order centred difference scheme in the interior and a first-order difference scheme in the boundaries.
EDMD, BOPDMD, LANDO and QGPRODE are truncated to the first 12 singular values.
As shown in Table \ref{tab:3}, QGPRODE achieves the lowest scaled L1 and relative L2 errors, while also maintaining short training and testing times (under 1 second). This is due to its projection onto a sphere and the use of a time integrator that enforces stability, even though the time integration itself makes it the slowest method computationally.  
GPRODE exhibits the largest error because of overfitting (no truncation is applied), whereas BOPDMD is less susceptible to overfitting due to its linear nature. EDMD and LANDO outperform GPRODE due to truncation, but they still overfit and ultimately perform worse than BOPDMD. Both LANDO and BOPDMD are slower than EDMD. In particular, in the case of LANDO, due to the subsampling step and in the case of BOPDMD, due to its ensembling procedure.
In Figs. \ref{fig:3} and \ref{fig:4} the QGPRODE dynamic is compared with that of the true model. We remark that in Fig. \ref{fig:3}, the bound is higher in which there is greater variance in the training data, and that the true variable is between the lower and upper bounds for every space point. The QGPRODE solution has less variability than the true solution due to the sphere projection, which is inherently dissipative in nature.
In Fig. \ref{fig:4}, we observe that the QGPRODE is able to get the distribution of the stream over the space. \\
In Fig. \ref{fig:3_1}, the temporal evolution of the L1 error is shown.

GPRODE does not perform well because, differently from the other methods tested, there is no rank truncation, and the model overfits due to the model misspecification and low data availability.\\

\begin{figure}
\centering
\includegraphics[width=\textwidth]{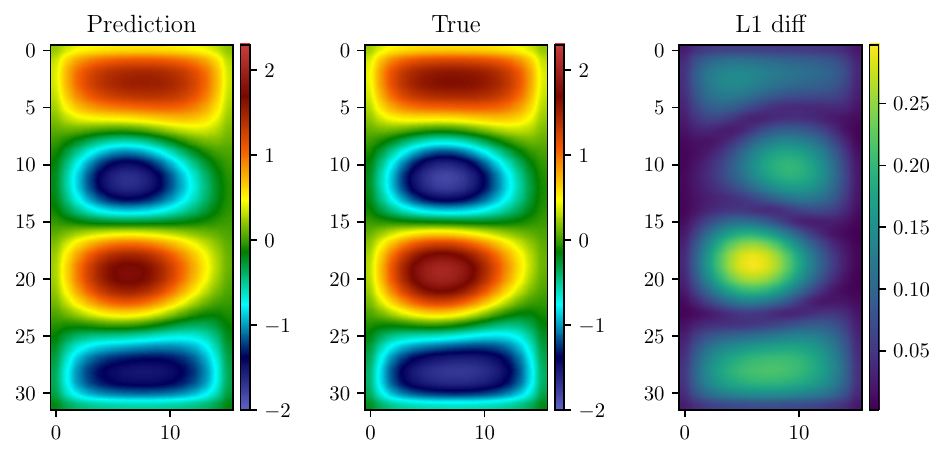}
\caption{BV-$\alpha$ test case. Prediction at time 100 of the QGPRODE with 95\% confidence bounds and the true value. }
\label{fig:3}    
\end{figure}

\begin{figure}
\centering
\includegraphics[width=\textwidth]{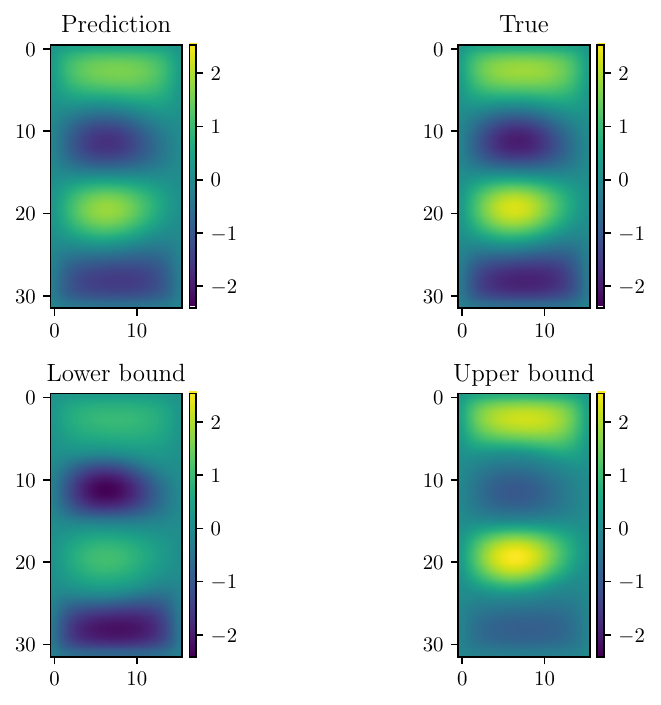}
\caption{BV-$\alpha$ test case. Dynamics of the QGPRODE and of the true model, and the L1 error, averaged through time, with uncertainty bands. }
\label{fig:4}    
\end{figure}

\begin{figure}
\centering
\includegraphics[width=\textwidth]{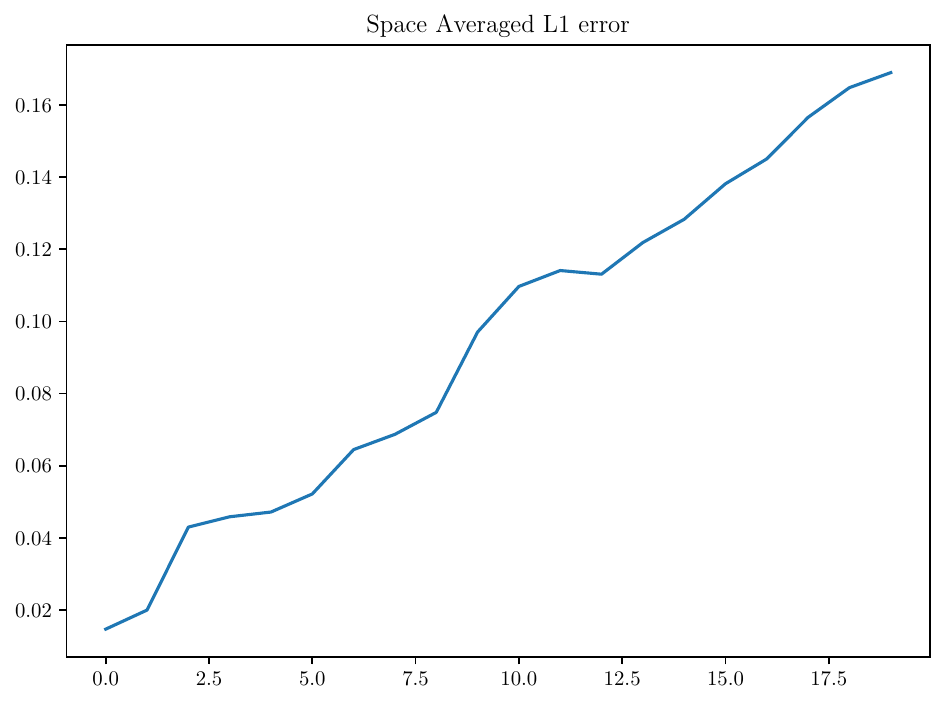}
\caption{BV-$\alpha$ test case. Space-Averaged Absolute Value Difference between the QGPRODE dynamics and the True dynamics. }
\label{fig:3_1}    
\end{figure}

\subsection{Earth air temperature}
As a final test case, we try to predict the Earth's air temperature at different pressure levels in the period between 01-10-2025 and 31-12-2025 over the entire Earth. Training data is taken from the ERA5 dataset \cite{c3s_era5_2018} and is composed of snapshots from 01-01-2025 to 30-09-2025 (1 sample per day). The data is downscaled to 2.5 degrees.

We test the QGPRODE and the GPRODE methodology, and we compare it with LANDO, EDMD, and BOPDMD.\\
Results are summarised in Tab. \ref{tab:4}.

\begin{table}[h]
\centering
\begin{tabular}{|l|l|l|l|}
\hline
Type     & scaled L1   & relative L2   & Time (seconds) \\ \hline
EDMD     & 3.16 & 3.17 & 8 \\ \hline
BOPDMD   & 0.06 & 0.9 & \textbf{6} \\ \hline
GPRODE   & 0.7 & 0.4 & 23.40 \\ \hline
LANDO    & 1.83 & 1.84 & 306 \\ \hline
QGPRODE & \textbf{0.05} & \textbf{0.08} & 9 \\ \hline
\end{tabular}
\caption{ERA5 temperature test case. Scaled L1 and relative L2 error between each predictor and the true value.}
\label{tab:4}
\end{table}

The spatial discretisation is $32 (height)\times 140 (latitude) \times 103 (longitude)$. Derivatives are computed using a second-order difference scheme in the interior and a first-order difference scheme in the boundaries.

QGPRODE requires a few seconds for training and testing. Thus, the proposed methodology accommodates the daily ingestion of newly available data for the generation of Digital Shadows. This feature is particularly advantageous, given that ERA5 is provided with a latency of approximately five days and does not supply forecast fields.\\
EDMD, BOPDMD, LANDO and QGPRODE are truncated to the first 20 singular values.\\

As reported in Table \ref{tab:4}, QGPRODE outperforms existing methodologies in terms of accuracy and remains competitive with respect to computational time. In contrast to EDMD and GPRODE, LANDO require computing the kernel matrix at every time step, which is computationally demanding in high-dimensional state spaces. The runtime of LANDO is the slowest among these methods due to data subsampling, whereas GPRODE is slower than EDMD as a consequence of the additional time-integration step. 
EDMD, GPRODE, and LANDO exhibit overfitting, although GPRODE does so to a lesser extent because of the stabilising effect of time integration. BOPDMD achieves superior performance because it is a linear method that does not overfit and is computationally more efficient than the other algorithms, as it does not require kernel matrix evaluations and involves fewer large-scale matrix operations than QGPRODE.
In Figs. \ref{fig:5} and \ref{fig:6}, the dynamics generated by QGPRODE are compared with the real dynamics. In Fig. \ref{fig:5}, the predictive uncertainty bound is larger in regions where the variance of the training data is higher, and the true state variable remains within the lower and upper bounds in every spatial location. Visual comparisons are restricted to the surface layer of the simulations. 
Furthermore, in Fig. \ref{fig:5}, QGPRODE accurately reproduces the spatial distribution of the stream. GPRODE does not perform well, as, unlike the other methods tested, there is no rank truncation, and the model overfits due to the model misspecification and low data.\\
The upper and lower bounds are high because the variance is increasing in time, and the time interval is very large. In Fig. \ref{fig:6_1}, the temporal evolution of the L1 error is shown.

\begin{figure}
\centering
\includegraphics[width=\textwidth]{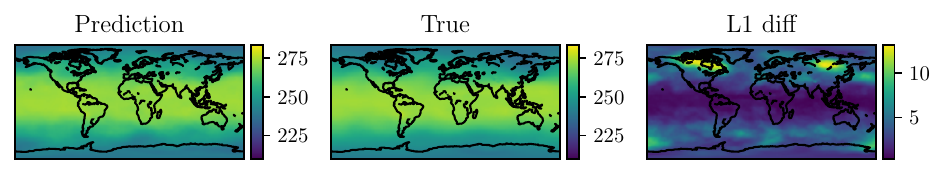}
\caption{ERA5 temperature test case. Prediction at time step 99 of the QGPRODE with confidence bounds and the true value. }
\label{fig:5}    
\end{figure}

\begin{figure}
\centering
\includegraphics[width=\textwidth]{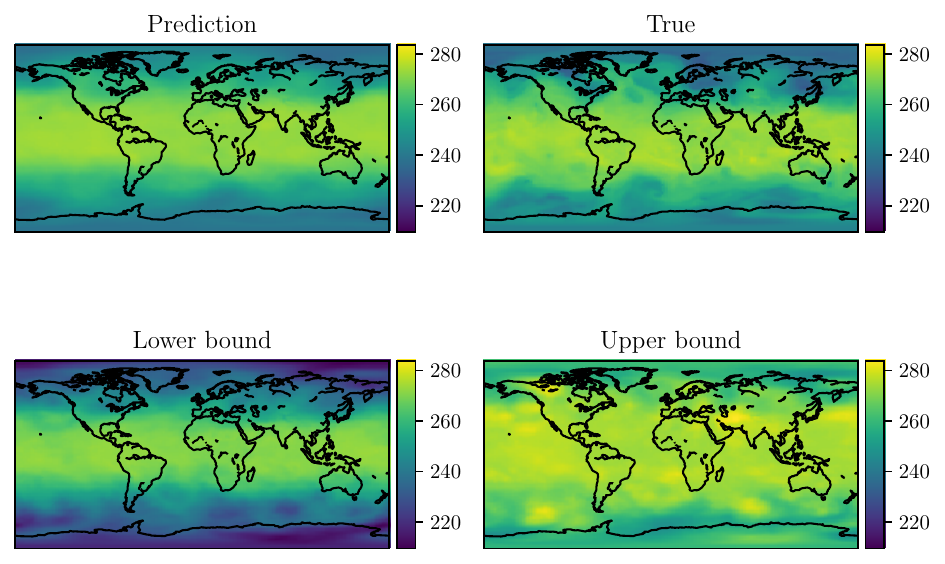}
\caption{ERA5 temperature test case. Dynamics of the QGPRODE and of the true model, and the L1 error, averaged through time, with uncertainty bands.}
\label{fig:6}    
\end{figure}

\begin{figure}
\centering
\includegraphics[width=\textwidth]{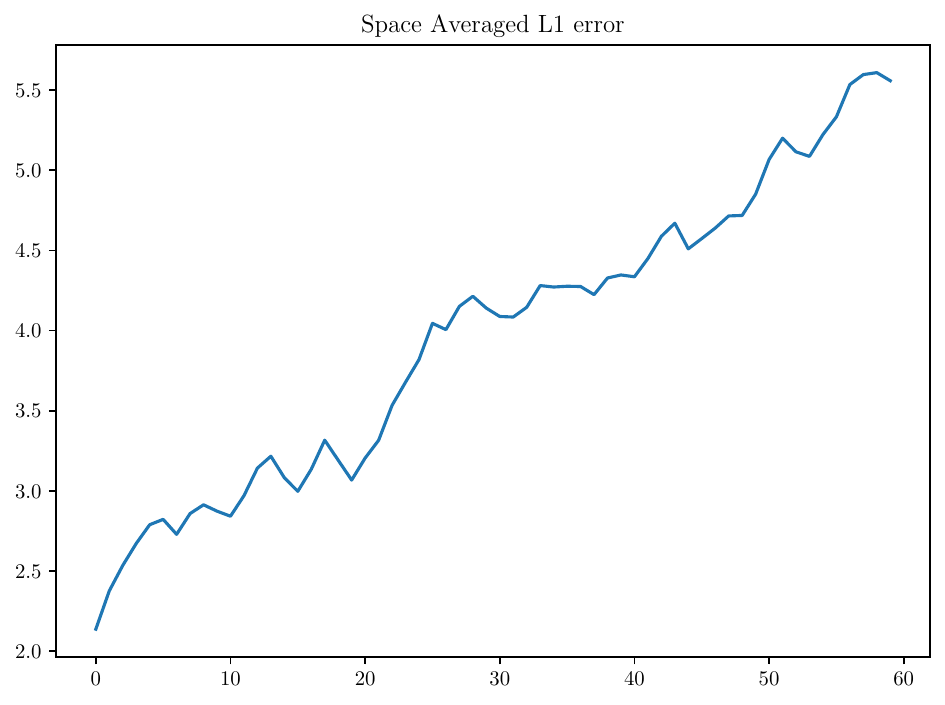}
\caption{ERA5 temperature test case. Space-Averaged Absolute Value Difference between the QGPRODE dynamics and the True dynamics.  }
\label{fig:6_1}    
\end{figure}

\section{Conclusion}

In this work, we have introduced a novel framework for constructing Digital Shadows of complex dynamical systems by leveraging a combination of Gaussian Processes and Stochastic Differential Equations. The proposed GPRODE model integrates kernel-based regression with derivative approximation to infer continuous-time dynamics, while the QGPRODE extension further enforces stability through a quadratic manifold projection onto the unit sphere.

We have rigorously established that, under mild regularity and sampling assumptions, GPRODE converges in the infinite-data limit. This convergence result also extends to certain classes of Stochastic Differential Equations and Random Differential Equations.

Through a series of numerical experiments—including the classical Lorenz system, a synthetic quadratic latent system, the BV-$\alpha$ geophysical flow model, and a real-world near-surface air temperature forecasting problem at the global scale—we have demonstrated that GPRODE and QGPRODE attain competitive short-term prediction accuracy with principled uncertainty quantification, outperforming classical reduced-order modelling (ROM) techniques. Both models exhibit high computational efficiency, which facilitates rapid retraining upon the arrival of new data, a critical requirement for operational Digital Shadow implementations.

Future research will focus on extending this methodology to multiple trajectories by incorporating reduced-order models in the parameter space. Moreover, generalising the framework to controlled dynamical systems and real-time decision support constitutes a promising avenue for practical deployment in engineering applications and environmental monitoring.

\section*{Acknowledgements}
The authors acknowledge the support provided by the European Union NextGenerationEU, in the framework of the iNEST- Interconnected Nord-Est Innovation Ecosystem (iNEST ECS00000043 - CUP G93C22000610007) consortium. The authors acknowledge the financial support provided by the INdAM-GNCS.

\bibliographystyle{unsrt}
\bibliography{MichelePaper}

\begin{comment}
\end{comment}

\appendix
\section{Theoretical Analysis of GPRODE}
In this section, we analyse the convergence properties of the GPRODE methodology. The primary objective of GPRODE is to infer the unknown drift function \(f\) that governs a dynamical system, using observed trajectory data. Rather than estimating \(f\) via a parametric model, we employ a nonparametric formulation based on Gaussian processes. \\ The central idea is to interpret the available data \(B_N\) as (possibly noisy) observations of the underlying vector field and to construct a probabilistic surrogate model for the drift. Within this framework, the posterior mean function \(\mu_N\) provides a point estimate of the drift, while the posterior variance function \(\sigma_N\) characterises the epistemic uncertainty associated with this estimate. \\ This probabilistic representation enables both the reconstruction of the system’s dynamics and the quantification and propagation of uncertainty over time. As additional data are incorporated into the model, the posterior distribution becomes increasingly concentrated, and the variance \(\sigma_N\) is expected to converge to zero under suitable regularity conditions, thereby yielding a deterministic approximation of the true dynamical system.
We provide another notation table for the variables in the appendix.

\begin{longtable}{@{} >{\(}l<{\)} p{9cm} @{}}
\caption{Summary of notation.}
\label{tab:notation2} \\
\toprule
\multicolumn{1}{l}{\textbf{Symbol}} & \textbf{Description} \\
\midrule
\endfirsthead
\multicolumn{2}{c}{\tablename~\ref{tab:notation} (continued)} \\
\toprule
\multicolumn{1}{l}{\textbf{Symbol}} & \textbf{Description} \\
\midrule
\endhead
\bottomrule
\endfoot

\multicolumn{2}{@{}l}{\textit{Data and state}} \\[2pt]
X(t) \in \mathbb{R}^{m}      & State vector of the dynamical system at time $t$ \\
X_{\mathrm{train}}            & Training dataset $\{X(t_1),\ldots,X(t_N)\}$ \\
m                             & State space dimension \\
N                             & Number of training samples \\
T_N                           & Set of training times $\{t_1,\ldots,t_N\}$ \\
B_N                           & Set of training states $\{X(t)\mid t\in T_N\}$ \\
B                             & Closure $\overline{\{X(t)\mid t>0\}}$; assumed compact in $\mathbb{R}^m$ \\
\tilde{B}                     & Open trajectory set $\{X(t)\mid t>0\}$ before closure \\
U                             & Bounded, convex, open set with $B\subset U$ \\
h_{B,B_N}                     & Fill distance $\max_{x\in B}\min_{y\in B_N}\|x-y\|_2$ \\[6pt]

\multicolumn{2}{@{}l}{\textit{Dynamics and operators}} \\[2pt]
f:\mathbb{R}^m\to\mathbb{R}^m & True drift function of the autonomous ODE $\dot{X}=f(X)$ \\
L_f                           & Lipschitz constant of $f$ \\
\bar{L}                       & Uniform bound on the Lipschitz constants $L_N$ of $\mu_N$ \\
M                             & Prediction horizon  \\
y(t,x)                        & Flow of the ODE from initial condition $x$ at time $t$ \\
\Psi                          & Finite-difference operator approximating $\dot{X}(t_i)$ \\
\Psi_N                        & Derivative approximator of order $p$; maps $H^{(m+5)/2}(U)\to H^s(U)$ \\
\Phi                          & Numerical time-stepping function (e.g.\ RK45, symplectic) \\
\Delta t                      & Time step size for numerical integration \\
\overline{\Delta t}_N         & Maximum time spacing $\sup_{i\in T_N}(t_i - t_{i-1})$ in the training set \\
\overline{\Delta t}           & Global upper bound on $\overline{\Delta t}_N$; assumed $<2/L_f$ \\[6pt]

\multicolumn{2}{@{}l}{\textit{Gaussian process and kernel}} \\[2pt]
K(x,y)                        & Matérn kernel $k(\|x-y\|_2)=e^{-d}(1+d)$ \\
\mathrm{RKHS}(U)              & Reproducing Kernel Hilbert Space on $U$ induced by $K$; norm-equivalent to $H^{(m+3)/2}(U)$ \\
\mu_N(x)                      & GP posterior mean with exact derivatives: $K(x,B_N)K(B_N,B_N)^{+}f(B_N)$ \\
\tilde{\mu}_N(x)              & GP posterior mean with approximate derivatives: $K(x,B_N)K(B_N,B_N)^{+}\Psi_N(B_N)$ \\
\sigma_N(x)                   & GP posterior standard deviation \\
\hat{\sigma}_N                & $\sup_{x\in B}\sigma_N(x)$; uniform bound on posterior std \\
b_N(x)                        & Lipschitz-corrected diffusion coefficient derived from $\sigma_N(x)$ \\
f_N \sim \mathcal{N}(\mu_N, \sigma_N^2) & GP surrogate for the drift with exact derivatives \\
\tilde{f}_N \sim \mathcal{N}(\tilde{\mu}_N, \sigma_N^2) & GP surrogate for the drift with approximate derivatives \\
C_{\Psi}(f),\,C_{2,\Psi}(f)  & Approximation constants of the derivative approximator $\Psi_N$ \\
\bar{C}_\Psi(f),\,\bar{C}_{2,\Psi}(f) & Approximation constants in terms of $\overline{\Delta t}_N$ (Assumption~4) \\[6pt]

\multicolumn{2}{@{}l}{\textit{GPRODE model quantities}} \\[2pt]
\tilde{E}_N(t)                & Mean trajectory obtained with approximate derivative $\tilde{\mu}_N$ \\
\tilde{V}_N(t)                & Variance of the trajectory\\
\Sigma(t)                     & Approximate covariance matrix $\mathrm{Cov}_\omega(X(t,\omega))$ \\
\bar{V}                       & Threshold (saturation) variance \\
\gamma(x)                     & Saturation function $x/(x+0.001)$ enforcing $V\le\bar{V}$ \\
A(t)                          & Augmented state $[E(t)^\top,\,V(t)]^\top$ \\
\tilde{A}_N(t)                & Augmented state built from $\tilde{E}_N$ and $\tilde{V}_N$ \\
g(A(t))                       & Right-hand side of the augmented ODE system \\
\tilde{g}_N(\tilde{A}_N(t))   & Right-hand side using approximate mean $\tilde{\mu}_N$ and $b_N$ \\
\tilde{A}_{\Phi,k,N}                  & Numerical solution of the augmented ODE at step $k$ via integrator $\Phi$ \\
K_N                           & Number of integration steps $M/\overline{\Delta t}_N \in \mathbb{N}$ \\
D_\Phi(f,hk)                  & Local error functional of the IVP solver $\Phi$ of order $p$ \\[6pt]

\multicolumn{2}{@{}l}{\textit{Function spaces}} \\[2pt]
H^s(U)                        & Sobolev space of order $s$ on $U$ \\
H^s_0(U,\mathbb{R}^m)         & Sobolev space with zero boundary trace \\
D^{(k+2+m)/2}(U,\mathbb{R}^m) & Set of diffeomorphisms in $H^{(k+2+m)/2}(U,U)$ \\
\mathbb{B}                    & Ball of diffeomorphisms $\{x\in H^{(k+2+m)/2}: \|x-\mathrm{id}\|\le\alpha<1\}$ \\
F(y)                          & Composition operator $x\mapsto f(y(x))$; shown Lipschitz on $\mathbb{B}$ \\[6pt]

\multicolumn{2}{@{}l}{\textit{Non-equispaced derivative approximation}} \\[2pt]
g_\epsilon                    & Best RKHS approximation of $f$ within $\epsilon$-tolerance; exists by universality of the Matérn kernel \\
f_\epsilon^{N,\Delta t_N}     & Solution of the constrained RKHS optimisation for non-equispaced data \\
\epsilon                      & Tolerance parameter in the constrained optimisation ($\epsilon>0$ if $f\notin\mathrm{RKHS}(U)$) \\
\beta_j                      & Dual coefficients in the kernel expansion $\sum_j \alpha_j K(X(t_i),X(t_j))$ \\[6pt]

\\[6pt]

\multicolumn{2}{@{}l}{\textit{Theoretical Analysis of QGPRODE}} \\[2pt]
\bar{E}[F]                    & Discrete time-average quadrature approximation of $(t_N-t_1)^{-1}\int F(t)\,dt$ \\
\phi & Function that maps $\tilde{B}$ to $\mathbb{S}^{k}$ \\
E_{\alpha}[f] & Mean of $f$ according to the ergodic measure $\alpha$. 

\end{longtable}

\subsection{Proof of convergence: exact derivative}
The convergence analysis of the GPRODE framework is structured in three main steps. \\ First, we show that the Gaussian process regression converges uniformly to the true drift function $f$ as the number of observations increases. \\Second, we establish that the solutions of the approximate ordinary differential equations converge to the solution of the true dynamical system. \\Finally, we extend the analysis to stochastic differential equations, proving convergence in mean square. \\ This decomposition allows us to isolate the different sources of approximation error and study them independently.\\
First, we present two assumptions required to establish convergence.
\begin{assumption} \label{assum:1}
We assume that the data inflow follows
\begin{equation}
\dot{X}(t)=f(X(t)),
\end{equation}
where $\overline{\{X(t)|t>0\}}= B$ is compact subset of $\mathbb{R}^{m}$
and exists $U$ bounded, convex and open set such that $B\subset U$ and $f\in H_{0}^{{\frac{m+5}{2}}}({U},\mathbb{R}^{m}).$
\end{assumption}
The assumption of compactness holds true in certain real-world applications, such as oceanographic currents, which are governed by conservation laws.
We will denote by $L_{f}$  the Lipschitz constant of $f$.
We define $$T_{N}
=\{t_{1},...,t_{N}\}$$ as the times of the training set, with $t_{i}<t_{i+1}\quad \forall i=1\ldots N$.\\
We define $$B_{N}
=\{X(t) | t \in T_{N}\}$$ as the training set.
Additionally, let
\begin{equation}
h_{B,B_{N}}=\max\limits_{x\in B}\min\limits_{y\in B_{N}}||x-y||_{2}
\end{equation}
the fill distance. Consistently with \cite{lederer_uniform_2021}, we assume that
\begin{assumption} \label{assum:2}
\begin{equation}
 \lim_{n \rightarrow +\infty}h_{B,B_{N}}=0.
\end{equation}
\end{assumption}
The assumptions introduced in this section are classical in the study of nonparametric dynamical systems. \\ The compactness of the trajectory ensures that the system evolves within a bounded subset of the state space, a property that is fundamental for establishing uniform convergence results. This requirement is fulfilled in a wide range of applications in which conservation laws or physical constraints restrict the evolution of the system. \\ The fill distance assumption guarantees that the observations become dense in the domain as the sample size increases, a condition that is crucial for the statistical consistency of the Gaussian process estimator. \\ The regularity assumption imposed on the drift function $f$ ensures that it lies in the reproducing kernel Hilbert space associated with the chosen kernel, thereby enabling the application of kernel-based approximation theory.\\
We are now prepared to present the methodology in a formal manner and to establish its convergence through rigorous proof.
Our focus is on predicting the behaviour of the ODE within the interval $(t_{N},t_{N}+M]$, where $M$ is independent of $N$. \\
As we did in Section \ref{sec:2}
\begin{equation}
 K(x,y)=k(||x-y||_{2})   
\end{equation}
where
\begin{equation}
    k(d)=e^{-{d}}(1+d)
\end{equation}
which corresponds to the Matérn kernel. This choice of kernel is motivated by the fact that it has been proved in \cite{tuo_improved_2020,wendland_scattered_2004} that
\begin{equation}
RKHS({U})=H^{\frac{m}{2}+\frac{3}{2}}({U})
\end{equation}
with equivalent norms.
It follows that $H_{0}^{{\frac{m+5}{2}}}({U},\mathbb{R}^{m})\subset RKHS({U},\mathbb{R}^{m})$, and thus we can learn the drift function $f$ in this space.\\
Finally, before stating Theorem 1, we define the instantaneous mean function sequences

\begin{equation}
{\mu}_{N}(x)=K(x,B_{N})K(B_{N},B_{N})^{+}f(B_{N}),
\end{equation}

the instantaneous variance function sequence
\begin{equation}
\sigma_{N}(x)=\sqrt{K(x,x)-K(x,B_{N})(K(B_{N},B_{N}))^{+}K(B_{N},x)},
\end{equation}
the modified variance function sequence
\begin{equation}
    b_{N}(x)=
    \begin{cases}
    \sigma_{N}(x) & \sigma_{N}(x) \ge 1\\
    \frac{2 \sigma_{N}(x)^{2}}{1+\sigma_{N}(x)^{2}} & \sigma_{N}(x)<1,
\end{cases}   
\end{equation}
and $f_{N}\sim \mathcal{N}(\mu_{N},\sigma_{N}^{2}).$
The following Theorem can now be stated.
\begin{theorem} \label{thm:1}
It holds
\begin{enumerate}
    \item \begin{equation}
    \begin{gathered} 
b_{N}^{2}(x)\le \sigma_{N}^{2}(x) \\ \leq 1-k(\operatorname{dist}(x,B_{N}))\le \frac{\operatorname{dist}(x,B_{N})}{2} \quad \forall x \in U.
    \end{gathered}
\end{equation}
\item If assumption \ref{assum:2} holds, then 
if we define $\hat{\sigma}_{N}=\sup_{x\in B}\sigma_{N}$, we get
\begin{equation}
\lim_{N\rightarrow +\infty} \hat{\sigma}_{N}= 0\end{equation} uniformly.\\
\item If assumption \ref{assum:1} holds, then
\begin{equation}
\sup_{x\in B}||\mu_{N}(x)-f(x)||_{2}\le 2||f||_{{RKHS}(U,\mathbb{R}^{m})}\sigma_{N}(x), \quad \forall x\in U.
\end{equation}
\item If assumptions \ref{assum:1},\ref{assum:2} hold, then $\sup_{x\in B} ||\mu_{N}(x)-f(x)||_{2}\rightarrow 0$.
\end{enumerate}

\begin{proof}
\begin{enumerate}
    \item The inequality is found in \cite{lederer_uniform_2021,koepernik_consistency_2021} if the inverse matrix is adopted of the pseudoinverse, but the proof follows straightforwardly also for the pseudo-inverse case.
    \item Follows from the previous point.
    \item The scalar version with the inverse matrix instead of the pseudoinverse is found in \cite{maddalena_deterministic_2021,marchi_kernel_2011}, but the proof follows straightforwardly also for the pseudo-inverse case. Our case follows by applying the scalar version to every component and by computing the $\ell_{2}$ norm.
    \item Follows from assumptions and from points 2 and 3.
\end{enumerate}
\end{proof}
\end{theorem}

Theorem \ref{thm:1} establishes the core approximation guarantees for the Gaussian process model. \\The first part shows that the posterior variance $\sigma_N(x)$ can be bounded in terms of the distance to the training points, emphasising how the fill distance governs the level of uncertainty. \\The second part guarantees that the maximum variance converges uniformly to zero, indicating that the model becomes progressively more confident across the entire domain.\\
The third and fourth parts show that the approximation error of the mean function $\mu_N$ is controlled by the variance and therefore also vanishes in the limit.\\
In summary, the theorem formalises the intuition that dense coverage of the domain by data points simultaneously yields lower uncertainty and higher accuracy.\\
In order to establish results about the convergence of ODEs, we define the systems
\begin{equation}
\label{eq:odeexactappex}
\begin{cases}
\dot{{E}}_{N}(t)={\mu}_{N}({E}_{N}(t)) & t\in (t_{N},t_{N}+M],\\ 
\dot{{V}}_{N}(t)=\sigma_{N}({E}_{N}(t))^{2} & t\in (t_{N},t_{N}+M],\\ 
{E}_{N}(t_{N})=Y(t_{N}),\\
{{V}}_{N}(t_{N})=0,\\
\end{cases}
\end{equation}

\begin{equation}
\label{eq:odebappex}
\begin{cases}
\dot{{E}}_{N}(t)={\mu}_{N}({E}_{N}(t)) & t\in (t_{N},t_{N}+M],\\ 
\dot{{V}}_{N}(t)=b_{N}({E}_{N}(t))^{2} & t\in (t_{N},t_{N}+M],\\ 
{E}_{N}(t_{N})=Y(t_{N}),\\
{{V}}_{N}(t_{N})=0,\\
\end{cases}
\end{equation}

\begin{equation}
\label{eq:odeblimappex}
\begin{cases}
\dot{{E}}_{N}(t)={\mu}_{N}({E}_{N}(t)) & t\in (t_{N},t_{N}+M],\\ 
\dot{{V}}_{N}(t)=b_{N}({E}_{N}(t))^{2}\gamma\left(\frac{\bar{V}_{N}-V_{N}(t)}{\bar{V}_{N}}\right) & t\in (t_{N},t_{N}+M] 
{E}_{N}(t_{N})=Y(t_{N}),\\
{{V}}_{N}(t_{N})=0,\\
\end{cases}
\end{equation}

and the SDE

\begin{equation}   
\label{eq:sdeappex}
\begin{cases} 
d{Y}_{N}=\mu({Y}_{N}(t))dt+b({Y}_{N}(t))I_{m}dW & t\in (t_{N},t_{N}+M],\\
Y_{N}(t_{N})={Y}(t_{N}).
\end{cases}
\end{equation}

The dynamical systems introduced in Equations \eqref{eq:odeexactappex}--\eqref{eq:sdeappex} offer distinct mechanisms for propagating the learned dynamics. \\The deterministic ordinary differential equation driven by $\mu_N$ constitutes a direct approximation of the underlying true dynamical system. \\
The variance dynamics characterise the evolution of uncertainty along the trajectory. The modified variance term $b_N$ is introduced to enhance numerical stability, in particular in regimes where the variance attains small values. \\
The stochastic differential equation formulation explicitly incorporates uncertainty via a diffusion term, thereby enabling a probabilistic description of the system’s temporal evolution. \\
Collectively, these formulations yield complementary viewpoints: a deterministic approximation of the mean dynamics and an uncertainty-aware model capturing stochastic variability. 
We now proceed to show that these equations converge to the true underlying ODE. To establish this result, we first present several general theorems concerning the convergence of normal random variables, ordinary differential equations (ODEs), and stochastic differential equations (SDEs).
\begin{proposition}\label{prop:1}
Let $g_{N}$ a sequence of random variables with $E[g_{N}]=\mu_{N}$ and $Var[g_{N}]=\sigma_{N}^{2}$
such that 
\begin{equation}
\mu_{N}\rightarrow \mu
\end{equation}
and 
\begin{equation}
\sigma_{N}^{2}\rightarrow 0.
\end{equation}
Then 
\begin{equation}
E[|\mu-g_{n}|]\rightarrow 0.
\end{equation}
\begin{proof}
We have $E[|g_{n}-\mu|]\le E[|g_{n}-\mu_{n}|]$ + $|\mu-\mu_{n}|\le E [|g_{n}-\mu_{n}|]$ + $|\mu-\mu_{n}|$ $\le \sigma_{n} + |\mu-\mu_{n}|$ due to Cauchy Schwartz, which converges to 0.

\end{proof}
\end{proposition}
For the convergence of ODEs and SDEs, we need stronger assumptions.
\begin{proposition}\label{prop:2}
 Let $f_{n}:U\rightarrow \mathbb{R}^{m}$ a sequence of Lipschitz  functions (with Lipschitz constant $L_{N}$ on U) that converges uniformly to $f$ on a subset $B\subset U$, with $f$ Lipschitz and Lipschitz constant $L$, and with $L_{N}\le \bar{L}$ and $y_{n}$ a sequence that converges to $y$.

Then, let $Y_{n}(t)$ be the solution of the ODE
\begin{equation}
\begin{cases}
\dot{Y}_{N}=f_{N}(Y_{N}) & t\in [t_{N},t_{N}+M],\\
{Y}_{N}(t_{N})=y_{N},
\end{cases}
\end{equation}
and $Y$ is the solution of
\begin{equation}
\begin{cases}
\dot{Y}=f(Y) & t\in [0,M],\\
{Y}(t_{N})=y_{N}.
\end{cases}
\end{equation}
If $Y(t)\subset B$,
we have $$\lim_{N\rightarrow\infty}\sup_{t\in [0,M]}||
Y_{N}(t+t_{N})-Y(t+t_{N})||_{2}=0.$$
\begin{proof}
We have
\begin{equation}
({Y}_{N}(t+t_{N})-{Y}(t+t_{N}))=\int_{t_{N}}^{t+t_{N}}(f_{N}({Y}_{N}(s))-f(Y(s))) ds
\end{equation}
so
\begin{equation}
||({Y}_{N}(t+t_{N})-{Y}(t+t_{N}))||_{2}\le \int_{t_{N}}^{t++t_{N}}||(f_{N}({Y}_{N}(s))-f(Y(s)))||_{2} ds
\end{equation}
\begin{equation}
\le \int_{t_{N}}^{t+t_{N}}||(f_{N}({Y}_{N}(s))-f_{N}(Y(s))||_{2}+||f_{N}(Y(s))-f(Y(s))||_{2} ds
\end{equation}
\begin{equation}
\le \bar{L}\int_{t_{N}}^{t+t_{N}}||({Y}_{N}(s))-(Y(s))||_{2}ds+t\sup_{s\in [0,t]}||f_{N}({Y}(s))-f(Y(s))||_{2}.
\end{equation}
Let $\chi_{N}=\sup_{s\in [t_{N},t_{N}+t]}||f_{N}({Y}(s))-f(Y(s))||_{2}$. Due to the Grönwall inequality, it holds that
\begin{equation}
||({Y}_{N}(t+t_{n})-{Y}(t+t_{n}))||_{2}\le t\chi_{n}e^{Lt},
\end{equation}
which converges to zero as $\chi_{n}$ goes to 0 due to the uniform convergence on $B$.\\
\end{proof}
\end{proposition}

\begin{proposition}
\label{prop:var}
Let $Y(t):[0,M]\rightarrow  \mathbb{R}^{m}$ a time series, $\sigma_{N}:U\rightarrow \mathbb{R}^{+}$ a function that satisfies 
$\sigma_{N}(x) \le C\operatorname{dist}(x,X_{N})$, $Y_{N}(t)$ such that $||Y(t+t_{N}))-Y_{N}(t+t_{N})||_{2}\le C(t)g_{N}$ with $C(t)$ continuos and such that $h_{N}=\operatorname{dist}(Y(t),X_{N})$ and $g_{N}$ go to 0. Let also $\alpha$ non-negative function on $\mathbb{R}^{+}$ such that
$$G(y)=\int_{0}^{y}\frac{y}{\alpha(y)},$$ is continuos, monotone, well defined, satisfies $G(y)y>0$ and $G(0)=0$.

Then if $V_{N}(t)$ is the solution of the ODE
$$
\begin{cases}
\dot{V}_{N}(t)=\sigma_{N}^{2}(Y_{N}(t))\alpha(V_{N}(t)) \quad t \in [t_{N},t_{N}+M],\\
V_{N}(t_{N})=0,
\end{cases}
$$
it holds that  $V_{N}(t+t_{N})$ converges to $0$ as $N$ goes to $+\infty$.
\begin{proof}
We have
$$V_{N}(t+t_{N})=G^{-1}(\int_{t_{N}}^{t+t_{N}}\sigma_{N}(Y_{N}(s)))ds)\le G^{-1}(\int_{t_{N}}^{t+t_{N}}C(s)\operatorname{dist}(Y_{N}(s),X_{N})ds)\le$$

$$G^{-1}(\int_{t_{N}}^{t+t_{N}}C\operatorname{dist}(Y(s),X_{N})ds+\int_{t_{N}}^{t+t_{N}}C||Y(s)-Y_{N}(s)||_{2}ds)$$
which converges to 0 due to $G^{-1}$ being continuos as $G$ is continuous and monotone. 
\end{proof}
\end{proposition}

\begin{proposition}\label{prop:3}
Let $f_{N}$, $Y(t)$, $f$, $\sigma_{N}$ as in propositions \ref{prop:2}, \ref{prop:var}. Furthermore, assume that $\sigma_{N}$ Lipschitz. \\
Then if $Y_{n}(t)$ is the solution of the Ito SDE
\begin{equation}
\begin{cases}
d{Y}_{N}=f_{N}(Y_{N})dt+\sigma_{N}(Y_{N})IdW & t\in [t_{N},t_{N}+M],\\
{Y}_{N}(t_{N})=y_{N},
\end{cases}
\end{equation}

then
$$\lim_{N\rightarrow \infty} \sup_{t\in [0,M]} \mathbb{E}[||Y_{N}(t+t_{N})-Y(t+t_{N})||_{2}^{2}]=0.$$

\begin{proof}
Let $\chi_{n}=\sup_{x\in B}||f(x)-f_{n}(x)||_{2}^{2} $ and let $\hat{\sigma}_{n}=\sup_{x\in B}||\sigma_{n}||_{2}^{2}$ and

The SDE in consideration is equivalent to 

$$Y_{n}(t+t_{N})-Y(t+t_{N})=\int_{t_{N}}^{t+t_{N}}(f(Y_{n}(s))-f(Y(s)))ds+\int_{t_{N}}^{t+t_{N}}\sigma_{N}(Y_{n}(s))dW(s).$$
By applying $||\cdot||_{2}$ and computing the square we get

$$||Y_{N}(t+t_{N})-Y(t+t_{N})||_{2}^{2}\le 2\int_{t_{N}}^{t+t_{N}}||(f(Y_{N}(s))-f(Y(s)))||_{2}^{2}ds+2m\left(\int_{t_{N}}^{t+t_{N}}\sigma_{N}(Y_{N}(s))dW(s)\right)^2.$$
By taking the expectation and applying the Ito isometry, we get
$$\mathbb{E}[||Y_{N}(t+t_{N})-Y(t+t_{N})||_{2}^{2}]\le 2\int_{t_{N}}^{t+t_{N}}\mathbb{E}[||(f_{N}(Y_{N}(s))-f(Y(s)))||_{2}^{2}]+2m\mathbb{E}[\sigma_{N}^{
2
}(Y_{N}(s))]ds.$$

To bound the right end side, note that
$$||(f_{N}(Y_{N}(s))-f(Y(s)))||_{2}^{2} \le 2\bar{L}^{2} ||Y_{N}(s)-Y(s)||_{2}^{2}+2\chi_{n}^{2},$$

and $$\sigma_{N}^{2}(Y_{N}(s))\le 2C^{2}h_{N}^{2}+2C^{2}||Y_{N}(s)-Y(s)||_{2}^{2},$$ so we get

$$\mathbb{E}[||Y_{N}(t+t_{N})-Y(t+t_{N})||_{2}^{2}]\le 
(4\bar{L}^{2}+4mC^{2})\int_{t_{N}}^{t}\mathbb{E}[||Y_{N}(s)-Y({s})||_{2}^{2}]ds+4mtCh_{N}^{2}+4t\chi_{n}^{2}.$$
By Grönwall
$$\mathbb{E}[||Y_{N}(t+t_{N})-Y(t+t_{N})||_{2}^{2}]\le (4mtCh_{n}+4t\chi_{n})e^{(4\bar{L}+4mC)t}$$
which converges to 0.

\end{proof}
    
\end{proposition}

We are now ready to prove the main theorem of this section.

\begin{theorem}
\label{thm:2}
If  assumptions \ref{assum:1}, \ref{assum:2}, hold, then
\begin{enumerate}

\item \begin{equation}
\lim_{N \rightarrow +\infty}  E[|f_{N}-f|]=0.
\end{equation}

\item Let $E_{N}(t)$, $V_{N}$ the solutions of Eq. \ref{eq:odebappex}, or \ref{eq:odeblimappex}, \ref{eq:odeexactappex}.
Then given the sequence of stochastic process $Y_{N}(t+t_{N})$  with mean ${E}_{N}(t)-X(t+t_{N})$ and variance ${V}_{N}(t)$ if holds that the quantity
$\sup_{t\in [0,M]}\mathbb{E}[||Y_{N}(t+t_{N})-Y(t+t_{N})||_{2}]$
converges when $N\rightarrow +\infty$ to $0$, $\forall t \in [0,M]$.

\item Let $Y_{N}$ the solution of Eq. \ref{eq:sdeappex}. Then  $E[||Y(t+t_{N})-Y_{N}(t+t_{N})||_{2}]$ converges in mean square to  $0$ $\forall t \in [0,M]$.

\end{enumerate}
\end{theorem}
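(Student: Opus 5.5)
The plan is to obtain each item of Theorem \ref{thm:2} by feeding the kernel approximation results of Theorem \ref{thm:1} into the abstract convergence results, Propositions \ref{prop:1}--\ref{prop:3}. Throughout, $Y(t)=X(t)$ denotes the true trajectory, which lies in $B$ by Assumption \ref{assum:1}.

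\textbf{Item 1.} We apply Proposition \ref{prop:1} pointwise, and uniformly on $B$, to $g_N=f_N(x)\sim\mathcal{N}(\mu_N(x),\sigma_N^2(x)I_m)$. Theorem \ref{thm:1}, points 2 and 4, gives $\sup_B\|\mu_N-f\|_2\to0$ and $\hat\sigma_N\to0$. The Cauchy--Schwarz bound in Proposition \ref{prop:1}, with the factor $\sqrt m$ for the vector case, then yields
\[
\mathbb{E}\|f_N(x)-f(x)\|_2\le \sqrt m\,\hat\sigma_N+\sup_B\|\mu_N-f\|_2\to0 .
\]

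\textbf{Item 2.} First, the mean. We apply Proposition \ref{prop:2} with $f_n=\mu_N$, $y_N=X(t_N)$ and $Y=X$. This requires a uniform Lipschitz bound $\bar L$ on $\mu_N$ over $U$, and this is the step I expect to be the main obstacle. The first thing I would try is the RKHS estimate $\|\mu_N\|_{RKHS}\le\|f\|_{RKHS}$, which holds because $\mu_N$ is the minimum-norm interpolant and the orthogonal projection of $f$. The norm equivalence $RKHS(U)\simeq H^{(m+3)/2}(U)$ then gives uniform $H^{(m+3)/2}$ bounds. By the Sobolev embedding $H^{(m+3)/2}\hookrightarrow C^{1,1/2}(\bar U)$ on the convex set $U$, the gradients $\nabla\mu_N$ are uniformly bounded, which gives $\bar L$.

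Proposition \ref{prop:2} then yields $\sup_t\|E_N(t)-X(t+t_N)\|_2\le C(t)g_N$ with $g_N=\sup_B\|\mu_N-f\|_2\to0$.

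Next, the variance. We use Proposition \ref{prop:var} with the bound $\sigma_N\le \sqrt{\operatorname{dist}/2}$ from Theorem \ref{thm:1}, point 1, and $b_N\le\sigma_N$.
\begin{itemize}
\item For \eqref{eq:odeexactappex} and \eqref{eq:odebappex}, take $\alpha\equiv1$. Then $V_N(t)$ is bounded by $\int\sigma_N^2(E_N)\,ds\lesssim\int\operatorname{dist}(X(s),B_N)+\|X(s)-E_N(s)\|\,ds$, which tends to $0$ by Assumption \ref{assum:2} and the mean convergence.
\item For \eqref{eq:odeblimappex}, $0\le\gamma\le1$, so $V_N$ is dominated by the previous case.
\end{itemize}
Finally, for any process with mean $E_N$ and covariance $V_NI_m$, Jensen's inequality gives
\[
\mathbb{E}\|Y_N-X\|_2\le\sqrt{mV_N}+\|E_N-X\|_2\to0 .
\]

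\textbf{Item 3.} We invoke Proposition \ref{prop:3} with diffusion $b_N$. First, $b_N$ is Lipschitz because it is a $C^1$ modification of $\sigma_N$ that removes the square-root singularity: near $\sigma_N=0$ it equals $2\sigma_N^2/(1+\sigma_N^2)$, and $\sigma_N^2$ is smooth. Second, $b_N^2\le\sigma_N^2\lesssim\operatorname{dist}(x,B_N)$ supplies the needed control of $\mathbb{E}[b_N^2(Y_N)]$ by $h_N$ plus $\mathbb{E}\|Y_N-X\|^2$. The drift again uses the uniform Lipschitz bound $\bar L$ from Item 2. Grönwall then gives mean-square convergence, and Jensen's inequality transfers it to $\mathbb{E}\|\cdot\|_2$.

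One technical caveat remains. The SDE paths may leave $B$, where the uniform convergence $\mu_N\to f$ is not asserted. This is handled exactly as in Proposition \ref{prop:3}: add and subtract $\mu_N(X(s))$, so that only values of $\mu_N$ on the true trajectory $X(s)\in B$ are compared with $f$.
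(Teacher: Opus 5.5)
Your route is the paper's own. Its proof of Theorem~\ref{thm:2} is just ``Theorem~\ref{thm:1} combined with Propositions~\ref{prop:1}, \ref{prop:2}, \ref{prop:var}, \ref{prop:3}''. Your uniform Lipschitz bound, via $\|\mu_N\|_{RKHS}\le\|f\|_{RKHS}$ and $H^{(m+3)/2}\hookrightarrow C^{1,1/2}$, supplies the hypothesis $L_N\le\bar L$ of Proposition~\ref{prop:2}, which the paper never checks. Items 1 and 2 go through.

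In Item 3, however, one step fails as written. You bound the diffusion by $b_N^2\le\sigma_N^2\lesssim\operatorname{dist}(x,B_N)$ and assert that this controls $\mathbb{E}[b_N^2(Y_N)]$ by $h_N$ plus $\mathbb{E}\|Y_N-X\|_2^2$. A bound linear in the distance only gives $\mathbb{E}[b_N^2(Y_N)]\lesssim h_{B,B_N}+\mathbb{E}\|Y_N-X\|_2\le h_{B,B_N}+\sqrt{u}$, where $u(t)=\mathbb{E}\|Y_N(t)-X(t)\|_2^2$. You therefore end with $u(t)\le a_N+C\int_0^t(u+\sqrt{u})\,ds$ and $a_N\to0$. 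Gr\"onwall does not handle the non-Lipschitz term $\sqrt{u}$, and this inequality does not force $u\to0$: $u(t)=(Ct/2)^2$ satisfies $u'=C\sqrt{u}$ with $u(0)=0$. This is why Proposition~\ref{prop:3} assumes $\sigma_N\le C\operatorname{dist}(x,B_N)$, i.e.\ a variance quadratic in the distance. The last link $\le\operatorname{dist}/2$ of Theorem~\ref{thm:1}(1), which you quote, is too weak to give it.

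The repair is short. Either keep the factor you threw away, $b_N=2\sigma_N^2/(1+\sigma_N^2)\le2\sigma_N^2$, so your own bound $\sigma_N^2\lesssim\operatorname{dist}$ already yields $b_N\lesssim\operatorname{dist}$. Or use the middle term of Theorem~\ref{thm:1}(1) together with $1-k(r)\le r^2/2$; this holds because $r^2/2-1+(1+r)e^{-r}$ vanishes at $0$ and has derivative $r(1-e^{-r})\ge0$. Either way $b_N^2(Y_N)\lesssim h_{B,B_N}^2+\|Y_N-X\|_2^2$, and the Gr\"onwall argument is linear again.

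A smaller point, also in Item 3: your $\bar L$ comes from a Sobolev embedding on $U$, but SDE paths leave $U$ with positive probability. Adding and subtracting $\mu_N(X(s))$ still requires $\|\mu_N(Y_N)-\mu_N(X)\|_2\le\bar L\|Y_N-X\|_2$ off $U$. This is also easy to fix. The function $\mu_N=\sum_j c_jK(\cdot,X(t_j))$ is its own minimal-norm extension, so its native-space norm on $\mathbb{R}^m$ equals its norm on $U$. Hence $\|\mu_N\|_{H^{(m+3)/2}(\mathbb{R}^m)}\lesssim\|f\|_{RKHS(U)}$, and the embedding on $\mathbb{R}^m$ gives a global $\bar L$. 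The nearest-point variance bound likewise holds for every $x\in\mathbb{R}^m$. This also removes any need to keep $E_N$ inside $U$ in Item 2.
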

\begin{proof}
    \begin{enumerate}
    \item Follows from Theorem \ref{thm:1} and Proposition \ref{prop:1}.
    \item Follows from Theorem \ref{thm:1}  and Propositions \ref{prop:2},\ref{prop:var}.
    \item Follows from Theorem \ref{thm:1} and Propositions 
     \ref{prop:3}.
        
    \end{enumerate}
\end{proof}

We have demonstrated that the GPRODE framework yields a mathematically consistent procedure for inferring dynamical systems from observational data. \\
In particular, the Gaussian process–based estimator converges uniformly to the true drift function, and the corresponding ODE and SDE models converge to the underlying system dynamics. \\
Collectively, these results furnish a rigorous theoretical foundation for the application of GPRODE in settings that demand both accurate prediction and principled uncertainty quantification.

\subsection{Proof of convergence: numerical approximation}
In practical applications, the time derivative $\dot{X}(t)$ is typically not directly observable and must be approximated from discretely sampled data. Moreover, the resulting system of differential equations must be solved using numerical integration schemes. These two steps introduce distinct sources of numerical error: one associated with the approximation of the derivative and another arising from the time-integration procedure. \\
The analysis presented in this section demonstrates that, under appropriate regularity and sampling assumptions, both error contributions converge to zero as the sampling becomes sufficiently dense. Consequently, the numerical implementation is asymptotically consistent with the underlying theoretical model.\begin{assumption}\label{assum:3}
Let
\begin{equation}
\overline{\Delta t_{N}}=\sup\limits_{i\in T_{N}}{t_{i}-t_{i-1}},
\end{equation}
we assume
\begin{equation}
\sup\limits_{N\in\mathbb{N}}\overline{\Delta t_{N}}=  {\overline{\Delta t}} < \frac{2}{L_{f}}, 
\end{equation}
and
\begin{equation}
\lim_{N\rightarrow +\infty }{\overline{\Delta t_{N}}}=0.
\end{equation}
\end{assumption}

We now define a finite-difference method of order $p$.

\begin{definition}\label{def:fd}
 A family of derivative approximators of order $p\ge 1$ is a sequence of operators
$\Psi_{N}:H^\frac{{m+5}}{2}(U)\rightarrow H^{s}(U)$
with $s=\frac{3+m}{2}\le s\le\frac{{m+5}}{2}$, such that 

\begin{equation}
\sup\limits_{x\in {U}} ||f(x)-\Psi_{N}[f](x)||_{2}\le C_{\Psi}(f) \frac{1}{{N}^{p}}, 
\end{equation}
and
\begin{equation}
||f(x)-\Psi_{N}[f]||_{H^{s}(U)}\le  C_{2,\Psi}(f) \frac{1}{{N}^{p}}\quad  \forall f\in H_0^{{\frac{m+5}{2}}}({U}).
\end{equation}
\end{definition}
Examples include the forward difference operator
\begin{equation}
\sqrt{N}\left({y\left(\frac{1}{\sqrt{N}},x\right)-x}\right),
\end{equation}
where 
\begin{equation}
\begin{cases}
\dot{y}(t ,x)={f}(y(t,x)),\\
y(0,x)=x,
\end{cases}
\end{equation}
which has an order of convergence $0.5$,
and has $C_{1, \Psi,}(f)$ given by $\frac{1}{2}\sup\limits_{x\in {U}}||\nabla_{x}f(x)||_{2} ||f(x)||_{2}$, which is finite to $f$ being $C^{1}$. \\
We prove that it also respects the second requirement in Appendix A with $s=\frac{m+3}{2}$.
We note that $\Psi_{N}$ depends on $f$, which is unknown. We need to further assume that we can infer some information from the data only.
\begin{assumption}\label{assum:4}
We assume that the values of $\Psi_{N}[f](t_{i})$, $i=1...N$ are computable knowing only $T_{N}$ and $B_{N}$ and that

\begin{equation}
\sup\limits_{x\in {U}} ||f(x)-\Psi_{N}[f](x)||_{2}\le \bar{C}_{\Psi}(f) \overline{\Delta t_{N}}^{q},
\end{equation}
and
\begin{equation}
||f(x)-\Psi_{N}[f]||_{H^{s}(U)}\le  \bar{C}_{2,\Psi}(f) \overline{\Delta t_{N}}^{q} \quad  \forall f\in H_0^{{\frac{m+5}{2}}}({U}),
\end{equation}
for a certain $q>0$.
\end{assumption}
The forward difference method satisfies this assumption in the case of equispaced data (the constants are the same as Definition \ref{def:fd}). We need the assumption of $s=\frac{3+m}{2}$ because we are working with a kernel with that regularity, but it can be changed assuming different hypotheses for $f$ or by changing the kernel. We were unable to construct a general difference scheme for non-equispaced data that satisfies the assumption \ref{assum:4}. We propose an alternative later in this Appendix.

Let 
\begin{equation}
\tilde{\mu}_{N}(x)=K(x,B_{N})K(B_{N},B_{N})^{+}\Psi_{N}(B_{N}),
\end{equation}

\begin{equation}
\label{eq:odeappendpsi}
\begin{cases}
\dot{\tilde{E}}_{N}(t)=\tilde{\mu}_{N}(\tilde{E}_{N}(t)) & t\in (t_{N},t_{N}+M],\\ 
\dot{\tilde{V}}_{N}(t)=b_{N}(\tilde{E}_{N}(t))^{2}\gamma\left(\frac{\bar{V}_{N}-\tilde{V}_{N}(t)}{\bar{V}_{N}}\right) & t\in (t_{N},t_{N}+M],\\ 
\tilde{E}_{N}(t_{N})=Y(t_{N}),\\
{\tilde{V}}_{N}(t_{N})=0,\\
\end{cases}
\end{equation}

\begin{equation}
\label{eq:sdeappendpsi}
\begin{cases} 
dY_{N}=\tilde{\mu}({Y}_{N}(t))dt+b({Y}_{N}(t))I_{m}dW & t\in (t_{N},t_{N}+M],\\
{Y}_{N}(t_{N})=Y(t_{N}).
\end{cases}
\end{equation}

We define $\tilde{f}_{N}(x)$ as a sequence of random variables with mean $\tilde{\mu}_{N}(x)$ and variance $\sigma_{n}(X)$.\\
We are ready to state and prove the following Theorem.

\begin{theorem}
If the assumptions \ref{assum:1}, \ref{assum:2},  \ref{assum:3}, \ref{assum:4}  hold, then
\begin{enumerate}
\item     $\tilde{\mu}_{N}\rightarrow f$
uniformly.

\item $ \lim_{N \rightarrow +\infty}  E[|\tilde{f}_{N}-f|]=0$

\item Let 
\begin{equation}
{Y}_{N}(t)\sim MN(\tilde{E}_{N}(t),{V}_{N}(t))
\end{equation}
where $\tilde{E}_{N}(t),\tilde{V}_{N}(t)$ are defined in Eq. \ref{eq:odeappendpsi}. Then $\sup_{t\in [0,M]}\mathbb{E}[||{Y}_{N}(t+t_{N})-Y(t+t_{N})||_{2}^{2}]$
converges to $0$.

\item Let $\tilde{Y}_{N}(t)$ the solution of Eq. \ref{eq:sdeappendpsi}. The $\sup_{t\in [0,M]}\mathbb{E}[||{Y}_{N}(t+t_{N})-Y(t+t_{N})||_{2}^{2}]$
converges to $0$.
\end{enumerate}
\end{theorem}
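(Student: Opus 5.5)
The plan is to reduce everything to the exact-derivative case of Theorem~\ref{thm:2}. The key observation is that $\tilde{\mu}_{N}$ is the same linear interpolation operator applied to different data. Write $I_{N}[g](x)=K(x,B_{N})K(B_{N},B_{N})^{+}g(B_{N})$. Then $\mu_{N}=I_{N}[f]$ and $\tilde{\mu}_{N}=I_{N}[\Psi_{N}[f]]$, and therefore
\[
\tilde{\mu}_{N}-f=(\mu_{N}-f)+I_{N}[\Psi_{N}[f]-f].
\]

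\textbf{Step 1 (item 1).} The first term tends to zero uniformly on $B$ by item 4 of Theorem~\ref{thm:1}. For the second term, set $e_{N}=\Psi_{N}[f]-f\in H^{s}(U)\subset RKHS(U)$, using $s\ge\frac{m+3}{2}$ and the norm equivalence. Since $I_{N}$ is the minimum-norm interpolant, it is an orthogonal projection in the RKHS, so $\|I_{N}[e_{N}]\|_{RKHS}\le\|e_{N}\|_{RKHS}\le c\,\bar{C}_{2,\Psi}(f)\overline{\Delta t_{N}}^{q}$ by Assumption~\ref{assum:4}. The reproducing property then gives, for each component,
\[
|I_{N}[e_{N}](x)|\le\sqrt{K(x,x)}\,\|I_{N}[e_{N}]\|_{RKHS}=\|I_{N}[e_{N}]\|_{RKHS}.
\]
This tends to zero by Assumption~\ref{assum:3}, so $\tilde{\mu}_{N}\to f$ uniformly on $B$. (Alternatively, item 3 of Theorem~\ref{thm:1} applied to $\Psi_{N}[f]$ in place of $f$ gives $\|\tilde{\mu}_{N}-\Psi_{N}[f]\|\le 2\|\Psi_{N}[f]\|_{RKHS}\sigma_{N}$. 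The RKHS norm of $\Psi_{N}[f]$ is uniformly bounded, and the sup bound in Assumption~\ref{assum:4} controls $\Psi_{N}[f]-f$.)

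\textbf{Step 2 (item 2).} This follows from Step 1, item 2 of Theorem~\ref{thm:1} ($\hat{\sigma}_{N}\to0$), and Proposition~\ref{prop:1} applied pointwise.

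\textbf{Step 3 (items 3 and 4).} I would invoke Proposition~\ref{prop:2} with $f_{N}=\tilde{\mu}_{N}$, so the main obstacle is a uniform Lipschitz bound $\bar{L}$ for $\tilde{\mu}_{N}$. I would get this from the RKHS bound: $\|\tilde{\mu}_{N}\|_{RKHS}\le\|\Psi_{N}[f]\|_{RKHS}\le\|f\|_{RKHS}+c\bar{C}_{2,\Psi}\overline{\Delta t}^{q}$ is bounded uniformly in $N$. The Matérn kernel with $k(d)=e^{-d}(1+d)$ is $C^{2}$ as a function of $x-y$. Consequently, for $g$ in the RKHS,
\[
\|\nabla g(x)\|\le\|g\|_{RKHS}\sup_{x}\|\nabla_{x}\nabla_{y}K(x,y)|_{y=x}\|^{1/2}.
\]
This gives a uniform $\bar{L}$ depending only on $m$. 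Proposition~\ref{prop:2} then yields $\sup_{t}\|\tilde{E}_{N}-Y\|\to0$ with the explicit Grönwall rate.

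For the variance, I would use Proposition~\ref{prop:var} with $\alpha=\gamma(\cdot)$, item 1 of Theorem~\ref{thm:1} ($b_{N}^{2}\le\sigma_{N}^{2}\le\operatorname{dist}/2$), and the rate $C(t)g_{N}$ from the mean equation. This gives $\tilde{V}_{N}\to0$ uniformly on $[0,M]$. The bias–variance decomposition then gives
\[
\mathbb{E}\|Y_{N}-Y\|^{2}=\|\tilde{E}_{N}-Y\|^{2}+m\tilde{V}_{N}\to0.
\]

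For item 4, I would apply Proposition~\ref{prop:3} with $f_{N}=\tilde{\mu}_{N}$ and diffusion $b_{N}$. I need $b_{N}$ Lipschitz (it is $C^{1}$ by construction, with a derivative bounded via that of $\sigma_{N}^{2}$, again from $K\in C^{2}$) and $b_{N}\le C\operatorname{dist}(\cdot,B_{N})$, which holds by item 1 of Theorem~\ref{thm:1}. The hardest point is the uniform Lipschitz control in Step 3. If the derivative-kernel argument fails near the diagonal, I would instead bound $\nabla\tilde{\mu}_{N}$ via Sobolev embedding $H^{(m+3)/2}\hookrightarrow C^{1}$ on the convex set $U$.
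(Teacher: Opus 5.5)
Your proposal is correct. It shares the paper's skeleton: item 1 by a triangle inequality plus the interpolation bounds of Theorem~\ref{thm:1}, and items 2--4 by Propositions~\ref{prop:1}, \ref{prop:2}, \ref{prop:var}, \ref{prop:3}. Your main argument for item 1 differs, and your Step 3 checks hypotheses the paper's proof leaves unverified.

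\textbf{Item 1.} The paper splits $\tilde{\mu}_N-f=(\tilde{\mu}_N-\Psi_N[f])+(\Psi_N[f]-f)$. It bounds the first piece by item 3 of Theorem~\ref{thm:1} applied to the data $\Psi_N[f]$, and the second by the sup-norm half of Assumption~\ref{assum:4}; this is exactly your parenthetical alternative. Your primary split goes through the exact-derivative interpolant $\mu_N$ and uses that $I_N$ is the RKHS-orthogonal projection onto $\operatorname{span}\{K(\cdot,x_i)\}$, which remains true with the pseudoinverse. This needs only the Sobolev half of Assumption~\ref{assum:4}, lets you quote item 4 of Theorem~\ref{thm:1} unchanged, and gives the same $O(\hat\sigma_N+\overline{\Delta t_N}^{q})$ rate. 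The paper's split has one small advantage: it measures the derivative error in the sup norm, where the constant is explicit for forward differences, rather than through the norm-equivalence constant between $RKHS(U)$ and $H^{(m+3)/2}(U)$.

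\textbf{Items 3 and 4.} The real payoff of your projection viewpoint is in Step 3. The paper proves items 3 and 4 by bare citation. It never establishes the uniform Lipschitz constant $\bar L$ that Proposition~\ref{prop:2} requires, nor $\tilde V_N\to0$, nor the uniform Lipschitz property of $b_N$. Your bound $\|\tilde\mu_N\|_{RKHS}\le\|\Psi_N[f]\|_{RKHS}$, together with the $C^2$ regularity of the Mat\'ern-$3/2$ kernel (for which $\nabla_x\nabla_yK(x,y)|_{y=x}=I_m$), gives such an $\bar L$ on all of $\mathbb{R}^m$. So the Gr\"onwall step is valid even if $\tilde E_N$ leaves $U$. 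Note that $\bar L$ depends on $f$ through $\sup_N\|\Psi_N[f]\|_{RKHS}$, not only on $m$.

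\textbf{A small correction.} Item 1 of Theorem~\ref{thm:1} gives only $b_N^2\le\operatorname{dist}(x,B_N)/2$, a square-root bound. It does not by itself yield the linear bound $b_N\le C\operatorname{dist}(x,B_N)$ used in the proof of Proposition~\ref{prop:3}. A linear bound is easy to get directly, with $d=\operatorname{dist}(x,B_N)$:
\begin{itemize}
\item projecting $K(\cdot,x)$ onto the kernel section of the nearest data point gives $\sigma_N^2(x)\le 1-k(d)^2\le 2(1-k(d))$;
\item $1-k(d)=\int_0^d se^{-s}\,ds\le d^2/2$;
\item $2s^2/(1+s^2)\le s$ for $s\ge 0$.
\end{itemize}
Hence $b_N\le\sigma_N\le\operatorname{dist}(x,B_N)$. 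Alternatively, your uniform Lipschitz bound on $b_N$ plus $b_N(Y(s))\to 0$ suffices.

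\textbf{A simplification.} For the variance you do not need Proposition~\ref{prop:var}. Since $0\le\gamma\le 1$ along the solution, $\tilde V_N(t)\le\int_{t_N}^{t}\bigl(h_{B,B_N}+\|\tilde E_N(s)-Y(s)\|_2\bigr)^2\,ds\to 0$ uniformly on $(t_N,t_N+M]$.
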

\begin{proof}
    \begin{enumerate}
        \item We have 
        \begin{equation}   
        \begin{gathered}
        ||(\tilde{\mu}_{N}-f(x))||_{2}\le \\
        ||(\tilde{\mu}_{N}-\Psi_{N}[f](x))||_{2}+||(f(x)-\Psi_{N}[f](x))||_{2}\\
    \le \sup_{x\in B} ||(\tilde{\mu}_{N}-\Psi_{N}(x))||_{2}  +C_{\Psi_{N}}(f)(\overline{\Delta t_{N}})^{q}.
    \end{gathered}
    \end{equation}
    
    As a consequence
    \begin{equation}
        \sup_{x\in B} ||(\tilde{\mu}_{N}-f)|| \le 2 ||\Psi_{N}(x)||_{H^{\frac{m+3}{2}}(U)}\sigma_{N}(x)
        +C_{\Psi_{N}}(f)(\overline{\Delta t_{N}})^{q},
\end{equation} due to the equivalence of norms.
    As $||\Psi_{N}||_{H^{\frac{3+m}{2}}(U)}$ converges uniformly to $||f||_{H^{\frac{3+m}{2}}(U)}$ because of its definition and assumption \ref{assum:4}, everything converges to $0$ because of Theorem \ref{thm:1} and assumption \ref{assum:3}.
    \item Follows from point 1 and Proposition \ref{prop:1}.
    \item Follows from point 1 and Proposition \ref{prop:2}.
    \item Follows from point 1 and \ref{prop:3}.

    \end{enumerate}
\end{proof}

We have shown convergence of the ODE with derivatives estimated using a finite-difference scheme to the true ODE.

\begin{definition}\label{def:ivp}
A functional $\Phi(X,f,h)$ is an Initial Value Problem solver of order $p$ if
given the sequence $X_{\Phi}$, $k\in \mathbb{N}$ given by
\begin{equation}
\begin{cases}
X_{\Phi,0,f}=X,\\
X_{\Phi,k+1,f}=\Phi(X_{\Phi,k,f},f,h),
\end{cases}
\end{equation}

if it holds $\forall k \in \mathbb{N}$
\begin{equation}
||X_{\Phi,k,f}-X(hk)||_{2}\le D_{\Phi}(f,hk)h^{p} \quad \forall f \in H_{0}^{{\frac{5+m}{2}}}({U})
\end{equation}
where $D_{\Phi}(f,hk)$ is a continuous functional and increasing in the second term.
\end{definition}

IVP solvers include both multistep methods (like Adams-Bashforth) and Runge-Kutta methods.

 Let
\begin{equation}
A(t)=\begin{bmatrix}
X(t)\\
0
\end{bmatrix},
\end{equation}

\begin{equation}
\tilde{A}_{N}(t)=\begin{bmatrix}
\tilde{E}_{N}(t)\\
\tilde{V}_{N}(t)
\end{bmatrix},
\end{equation}

\begin{equation}
\tilde{g}_{N}(\tilde{A}_{N}(t))=\begin{bmatrix}
\tilde{\mu}_{N}(\tilde{E}_{N}(t))\\
{b}_{N}^{2}(\tilde{E}_{N}(t))\gamma\left(\frac{\bar{V}_{N}-\tilde{V}_{N}(t)}{\bar{V}_{N}}\right)\\
\end{bmatrix},
\end{equation} we can rewrite the ODE system as
\begin{equation}\label{eq:odediffrec}
\begin{cases}
\tilde{A}_{N}(0)=A(t_{N}),\\
\dot{\tilde{A}}_{N}=\tilde{g}_{N}(\tilde{A}_{N}).
\end{cases}
\end{equation}
For simulating the ODE \ref{eq:odediffrec}, we use the same time stepping that is used for training, so $\overline{\Delta t _ {N}} $.   \\
Then we discretise it as
\begin{equation}
\begin{cases}
A_{\Phi,0,N}=A(t_{N}),\\
A_{\Phi,k,N}=\Phi(A_{\Phi,k-1,N},\tilde{g}_{N},\bar{\Delta t_{N}}).\\
\end{cases}
\end{equation}

\begin{theorem}
If assumptions \ref{assum:1}, \ref{assum:2}, \ref{assum:3}, \ref{assum:4}, hold and furthermore $\frac{M}{\overline{\Delta t_{N}}}=K_{N}\in \mathbb{N} \forall N.$
Then
\begin{itemize}
\item $
\lim\limits_{N\rightarrow +\infty} ||E_{\Phi,K_{N},N}-X(t_{N}+M)||_{2}=0 $
and $\lim\limits_{N\rightarrow +\infty} |V_{\Phi,K_{N},N}-0|=0.
$.
\item Let $X_{\Phi,K_{N},N}\sim MN(E_{\Phi,K_{N},N},V_{\Phi,K_{N},N})$, then
\begin{equation}
E[||X_{\Phi,K_{N},N}-X(M)||_{2}]\rightarrow 0.
\end{equation}

\end{itemize}
\begin{proof}
\begin{itemize}
    \item 
    \begin{equation}
    ||A(M)-A_{\Phi,K_{N},N}||_{2}\le ||A(M)-A_{N}(M)||_{2}+||A_{\Phi,K_{N},N}-A_{N}(M)||_{2}
    \end{equation}
    \begin{equation}
    \le ||A(M)-A_{N}(M)||_{2}+ \bar{D}(\Phi_{N})\overline{\Delta t_{N}}^{p},
    \end{equation}
    which converges to zero because of Theorem 2 and assumption \ref{assum:3}.
    \item Consequence of point 1 and Proposition \ref{prop:1}.
\end{itemize}
\end{proof}
\end{theorem}

\subsection{Proof regarding the forward difference scheme}
In this section, we 
prove that the forward difference scheme is convergent in $H^{\frac{k+m}{2}}(U)$.
Let define $D^{\frac{k+2+m}{2}}({U},\mathbb{R}^{m})$ the set of diffeomorphism which belongs $H^{\frac{k+2+m}{2}}({U},{U})$.
First, we need the following proposition:
\begin{proposition}
Let $\mathbb{B}=\{x\in H^{\frac{k+2+m}{2}}({U},\mathbb{R}^{m}) \text{ s.t.} ||x-\operatorname{id}||\le \alpha <1 \}$, $k\ge 3$ then 

\begin{enumerate}
    \item The operator $F:\mathbb{B} \rightarrow H^{\frac{k+m}{2}}({U},\mathbb{R}^{m})$ given by $F(y)=f(y(x))$ is well defined and furthermore Lipschitz. 
    \item it exists a time $h_{0}$ such that $X(h,x)\in \mathbb{B}, \forall h\le h_{0}.$
\end{enumerate}
\end{proposition}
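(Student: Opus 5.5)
Both claims rest on Sobolev embedding and composition estimates in $H^{s}$ with $s$ above $m/2$. The plan is to reduce the first to a standard composition-operator estimate and the second to smooth dependence of the flow on time.

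\emph{Setup.} Set $s=\frac{k+2+m}{2}$ and $s'=\frac{k+m}{2}=s-1$. Since $k\ge 3$, we have $s'>\frac{m}{2}+1$. Hence $H^{s}(U)$ and $H^{s'}(U)$ embed into $C^{1}(\overline U)$ and are Banach algebras, because $U$ is bounded, convex and therefore Lipschitz.

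\emph{Point 1: well-posedness.} By the embedding, for $y\in\mathbb{B}$ we have $\|y-\mathrm{id}\|_{C^{1}}\le C\alpha$. Taking $\alpha$ small enough (after rescaling the norm, which is how I would interpret the condition $\alpha<1$), $Dy$ is uniformly invertible, so $y$ is a $C^{1}$ diffeomorphism onto its image. We then write $f\circ y$ and differentiate with the chain rule and the Faà di Bruno formula. Each derivative of order $j\le s'$ of $f\circ y$ is a sum of terms $(D^{i}f)\circ y$ multiplied by products of derivatives of $y$ of orders $\ge1$ summing to $j$. The factor $(D^{i}f)\circ y$ lies in $H^{s'-i}$: change variables with $y^{-1}$, whose Jacobian is bounded, using $f\in H^{(m+5)/2}\supset H^{s'+1}$. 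The product factors are controlled by the algebra property together with Moser-type estimates. This shows $F(y)\in H^{s'}$.

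\emph{Point 1: Lipschitz bound.} Write
\[
F(y_{1})-F(y_{2})=\int_{0}^{1}Df\bigl(y_{2}+\theta(y_{1}-y_{2})\bigr)\,(y_{1}-y_{2})\,d\theta .
\]
By convexity of $\mathbb{B}$, each $y_{\theta}=y_{2}+\theta(y_{1}-y_{2})$ stays in $\mathbb{B}$. We then need a bound $\|Df\circ y_{\theta}\|_{H^{s'}}\le C$ uniformly over $\mathbb{B}$; this uses $f\in H^{s'+1}$ and is exactly why $F$ is taken to land in the space one derivative lower than $y$. The algebra property then gives
\[
\|F(y_{1})-F(y_{2})\|_{H^{s'}}\le C\,\|y_{1}-y_{2}\|_{H^{s}} .
\]

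\emph{Main obstacle.} The hard step is the uniform bound on $\|(D^{i}f)\circ y\|_{H^{s'-i}}$ at non-integer Sobolev order. I would first try the known composition theorems for $H^{s}$ diffeomorphism groups (Inci–Kappeler–Topalov style results), citing them rather than reproving. The fallback is to handle integer orders by Faà di Bruno and interpolate for the fractional part.

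\emph{Point 2.} Since $f\in H_{0}^{(m+5)/2}$ vanishes on $\partial U$ to first order, the flow $X(h,\cdot)$ maps $U$ into itself. I would view the flow equation
\[
X(h)=\mathrm{id}+\int_{0}^{h}F\bigl(X(\tau)\bigr)\,d\tau
\]
as an ODE on the affine space $\mathrm{id}+H^{s'}$. However, point 1 only maps $H^{s}\to H^{s'}$, so this loses a derivative. To avoid the loss, I would instead prove a priori that $\|X(h)-\mathrm{id}\|_{H^{s}}$ stays finite and small for small $h$, by differentiating the flow equation in $x$. The derivatives $D^{j}X$ satisfy linear variational ODEs whose coefficients involve $D^{i}f\circ X$, and Gronwall gives
\[
\|X(h)-\mathrm{id}\|_{H^{s}}\le C h\, e^{Ch}.
\]
This requires $f$ to be in $H^{s}$, which holds if $k\le 3$. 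Choosing $h_{0}$ with $Ch_{0}e^{Ch_{0}}\le\alpha$ then gives $X(h,\cdot)\in\mathbb{B}$ for all $h\le h_{0}$.
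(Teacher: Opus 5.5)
Your point 1 follows the paper's argument. Well-definedness comes from invertibility of $y$ plus the Inci--Kappeler--Topalov composition results. The Lipschitz bound comes from $F(y)-F(z)=\int_0^1\nabla f(y_\theta)\,d\theta\,(y-z)$ together with the algebra property of $H^{(k+m)/2}$. You are more careful than the paper about two things it glosses over: the embedding constant hidden in $\|Dy-I\|_\infty\le\|y-\mathrm{id}\|$, and the need for a bound on $\|\nabla f\circ y_\theta\|_{H^{s'}}$ that is uniform over $\mathbb{B}$.

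Point 2 is a genuinely different route. The paper disposes of it in one line: $F$ is Lipschitz, so the local existence/uniqueness theorem for ODEs in Banach spaces applies. You correctly note that this loses a derivative. The Lipschitz estimate actually proved is in the $H^{(3+m)/2}=H^{s-1}$ norm. So Picard iteration can only be run on a ball in $\mathrm{id}+H^{s-1}$, where $f\in H^{s}$ does make $F$ a Lipschitz self-map. That yields smallness of $\|X(h)-\mathrm{id}\|_{H^{s-1}}$, not $X(h)\in\mathbb{B}\subset H^{s}$; running it in $H^s$ would need $f\in H^{s+1}$.

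What each route buys:
\begin{itemize}
\item The paper's route is short and gives exactly what is used afterwards: the forward-difference estimate only needs $\|X(hu,\cdot)-\mathrm{id}\|_{H^{(3+m)/2}}$ to be small.
\item Your route, a priori $H^s$ bounds on the finite-dimensional flow via the variational equations for $D^jX$ and Gronwall, proves the statement as written.
\end{itemize}
The price of your route is two technical steps. You must handle the fractional top order when $s$ is not an integer. You must also justify that $\|X(h)-\mathrm{id}\|_{H^s}$ is finite before you estimate it, e.g.\ by mollifying $f$ and passing to the limit. Alternatively, cite the Inci--Kappeler--Topalov result that the flow of an $H^s$ vector field with $s>m/2+1$ lies in $C^1([0,h_0];\mathcal{D}^s)$. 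That gives $\|X(h)-\mathrm{id}\|_{H^s}\le\int_0^h\|f\circ X(\tau)\|_{H^s}\,d\tau\le Ch$ at once.

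One correction. In point 1 you write $f\in H^{(m+5)/2}\supset H^{s'+1}$, but the Lipschitz step needs $H^{(m+5)/2}\subseteq H^{s'+1}$, which holds only for $k\le 3$. So your whole argument, not just point 2, covers $k=3$. This restriction is forced rather than a defect. For $k\ge 4$ the Lipschitz claim fails for generic $f\in H_0^{(m+5)/2}$: difference quotients of $f$ along $y=\mathrm{id}+\varepsilon v$ would have to stay bounded in $H^{(k+m)/2}$, forcing $f\in H^{(k+m+2)/2}_{\mathrm{loc}}$. The paper's own proof also tacitly takes $k=3$ when it ends with $\|y-z\|_{H^{(3+m)/2}}$. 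State the restriction up front.
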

\begin{proof}
\begin{enumerate}
    \item 
    Let $y \in B$. As $||Dy-\operatorname{I}||_{\infty} \le ||y-\operatorname{id}||<1$ we have that $y$ is invertible and $\mathbb{B}\subset D^{\frac{k+2+m}{2}}({U},\mathbb{R}^{m})$ and a consequence $F$ is well defined \cite{inci_regularity_2013} .
    Now $||F(y)-F(z)||_{H^{\frac{k+m}{2}}}=||\int_{0}^{1}(\nabla f(y(x)+t(y(x)-z(x)))dt)^{T}(y(x)-z(x))||_{H^{\frac{k+m}{2}}}$,
    so as $H^{{\frac{k+m}{2}}}(U)$ is an algebra \cite{behzadan_multiplication_2021} we can bound everything by $$\int_{0}^{1}||(\nabla f(y(x)+t(y(x)-z(x)))dt)||_{H^{\frac{k+m}{2}}} ||(y-z)||_{H^{\frac{k+m}{2}}}.$$
    As $(y(x)+t(y(x)-z(x)))\in \mathbb{B}$,
    everything is bounded by $$||\nabla f||_{H^{\frac{k+m}{2}}}||y-z||_{H^{\frac{3+m}{2}}}.$$
    Thus, $F$ is Lipschitz.
    \item As $F$ is Lipschitz, the thesis follows from the local uniqueness theorem of ODE in Banach Spaces $\cite{pata_fixed_2019}$. 
\end{enumerate}
\end{proof}

We are now ready to prove the convergence of the Forward difference method.
Lets consider
\begin{equation}
||f(x)-\frac{X(h,x)-x}{h} ||_{H^{\frac{3+m}{2}}(U)}.
\end{equation}
By applying the definition of $X(h,x)$ and by changing variables, we arrive at
\begin{equation}
||\int_{0}^{1} \left(f(x)-f(X(hu,x))\right) du ||_{H^{\frac{3+m}{2}}(U)}.
\end{equation}
As $F$ is a Lipschitz operator, we can bound everything by 
\begin{equation}
C\int_{0}^{1}||(X(hu,x)-x) ||_{H^{\frac{3+m}{2}}(U)}du\le C \int_{0}^{1} ||\int_{0}^{hu}f(X(s,x)) ||_{H^{\frac{3+m}{2}}(U)}ds\le CC_{1}h
\end{equation}
as $X(s,x)$ is a diffeomorphism. \\

\subsection{What to do if the training times are not equispaced}
In this section, we study how we can modify our algorithm for accounting for non-equispaced times without assuming Assumption \ref{assum:4}. We focus on the forward difference method for derivative approximation, but it can be adapted to other differentiation algorithms.
As additional requirements, we need a bound to know the constant $M$ such that $\sup\limits_{x\in B} f(x)$, but we can relax the constraint of $f\in RKHS(U)$ to $f$ continuous and Lipschitz with constant $L$. However the computational costs moves from $N^{3}$ to $\log(N)N^{5.67}$.\\
Before stating the modified procedure, we remark that  $\forall \epsilon>0$, the   optimisation problem
\begin{equation}\inf \limits_ {\{g\in RKHS(U) | \sup_{x\in U}||g(x)-f(x)||_{2}\le \epsilon\}} ||g||_{RKHS(U)}
\end{equation}
has at a least a solution $g_{\epsilon}\in RKHS(U)$ due to the universality of the Matérn kernel \cite{micchelli_universal_2006}.
Now let $\Delta t_{N}=\sup\limits_{i\in {1\ldots N-1}} t_{i+1}-t_{i}$,  
As a consequence it holds $||\frac{X(t_{i+1})-X(t_{i})}{t_{i+1}-t_{i}}-f(X(t_{i}))||_{2}\le \frac{1}{2} LM \Delta t_{N}$,
where $M=\max_{x\in U}||f||_{2}$.
Let us consider the optimisation problem
\begin{equation}
\inf \limits_ {\{g\in RKHS(U) | \sup_{i=1...N}||g(X(t_{i}))-\frac{X(t_{i+1})-X(t_{i})}{t_{i+1}-t_{i}}||_{2}\le \epsilon+LM\Delta t_{N}\}} ||g||_{RKHS(U)}
\end{equation}
and its solution $f_{\epsilon}^{N,\Delta t_{N}}$. As $g_{\epsilon}$ is feasible, we have that $||f_{\epsilon}^{N,\Delta t_{N}}||_{RKHS(U)}\le ||g_{\epsilon}||_{RKHS(U)}$ and
\begin{equation}
\begin{gathered}
||f(x)-f_{\epsilon}^{N,\Delta t_{N}}(x)||_{2}\le ||f(x)-f(X(t_{i}))+f(X(t_{i}))-f_{\epsilon}^{N,\Delta t_{N}}(X(t_{i}))\\+ f_{\epsilon}^{N,\Delta t_{N}}(X(t_{i}))- f_{\epsilon}^{N,\Delta t_{N}}(x)||_{2}\le L||x-X(t_{i})||_{2}\\+\epsilon+LM\Delta t_{N}+||g_{\epsilon}||_{RKHS(U)}\sqrt{2-2K(x,X(t_{i}))} \quad \forall i \in 1\ldots N.
\end{gathered}
\end{equation}
Using the definition of fill distance, we arrive at
\begin{equation}
\begin{gathered}
\sup\limits_{x \in B}||f(x)-f_{\epsilon}^{N,\Delta t_{N}}(x)|| _{2}  \\ \le Lh_{B,B_{N}}+\epsilon+LM\Delta t_{N}+||g_{\epsilon}||_{RKHS(U)}\sqrt{2-2k(h_{B,B_{N}})}.
\end{gathered}
\end{equation}
Thus under Assumption \ref{assum:2} and \ref{assum:3} we have that $\lim\limits_{N\rightarrow +\infty}\sup\limits_{x\in B} ||f(x)-f_{\epsilon}^{N,\Delta t_{N}}(x)||_{2}\le \epsilon$.
Thus $f_{\epsilon}^{N,\Delta t_{N}}$ can be adopted instead of $\tilde{\mu}_{N}$ in the GPRODE and the QGPRODE algorithms.\\
We remark that $\epsilon$ can be chosen to be zero only if $f\in RKHS(U)$, otherwise it can be anything strictly bigger than $0$, keeping in mind that $||g_{\epsilon}||_{RKHS(U)}$ is strictly decreasing in $\epsilon$, and $||g_{0}||_{RKHS(U)}=+\infty$ if $f \not \in RKHS(U)$.
It remains to explain how to solve the above optimisation problem. Exploiting the properties of reproducing kernel Hilbert spaces (RKHSs) \cite{maddalena_deterministic_2021}, the problem is equivalent to solving
\begin{equation}
\inf_{\substack{\beta\in \mathbb{R}^{N} \\[2pt] \sup\limits_{i=1,\ldots,N}\left\lVert\sum_{j=1}^{N}\alpha_{j}k\bigl(X(t_{i}),X(t_{j})\bigr)-\dfrac{X(t_{i+1})-X(t_{i})}{t_{i+1}-t_{i}}\right\rVert_{2}\le \epsilon+LM\Delta t_{N}}}
\;\sum_{i,j=1}^{N} \beta_{j}\beta_{i}K\bigl(X(t_{i}),X(t_{j})\bigr),
\end{equation}
which defines a convex quadratic programming problem. This optimisation problem can be solved with a time complexity of $O(\log(N)N^{5.67})$ \cite{kapoor_fast_1986}. Consequently, it is computationally less efficient than the algorithm with time complexity $O(N^{3})$, which, however, requires the use of a differentiation method satisfying Assumption \ref{assum:4}. Consequently, the selection of numerical methods exerts a substantial influence on computational efficiency. \\ For equispaced data, finite-difference schemes enable numerically efficient implementations, exhibiting cubic computational complexity due to the inversion of the associated kernel matrix. \\ In contrast, for non-equispaced data, more general optimisation-based strategies are required, which generally incur higher computational costs. \\ This trade-off underscores the critical role of data structure in determining the practical feasibility and performance of numerical algorithms.

\section{Theoretical analysis of the QGPRODE}

The QGPRODE framework extends GPRODE by incorporating geometric structure in the data. \\ The key idea is that the trajectory of the dynamical system often lies on a low-dimensional manifold embedded in a higher-dimensional space. By mapping this trajectory onto a sphere via a smooth embedding, we can exploit geometric regularity.\\ This transformation allows us to construct reduced-order models that capture the essential dynamics while reducing complexity. \\The use of Taylor expansions and quadratic approximations further enables efficient representation of the system, connecting this approach with model order reduction techniques.

Let $\tilde{B}=\{X(t)|t > 0\}$, so that $B$ is the closure of $\tilde{B}$. 
We require ergodicity, in the sense that we assume that there exists a measure $\alpha:B: \rightarrow \mathbb{R}$ such that
\begin{equation}
\lim\limits_{T \rightarrow +\infty} \frac{1}{T} \int_{0}^{T}l(X(t))dt:=E_{\alpha}[l]=\int_{B}ld\alpha \quad \forall l\in C(B)
\end{equation}
and we furthermore require that $f\in C_{0}^{\lceil{\frac{5+m}{2}}\rceil}(U)$.
In this specific case, as we are considering only one trajectory, ergodicity is equivalent to strong ergodicity.
As a consequence, in the infinite-data limit, the operator $\bar{E}[F(X(\cdot))]$ converges to $E_{\alpha}[F]$.
The assumptions of compactness and ergodicity are also used in proofs of similar techniques, like Extended DMD \cite{korda_convergence_2018}.\\

Note that not all dynamical systems that respect Assumption \ref{assum:1} are ergodic, for example 
\begin{equation}
\begin{cases}
    \dot{x}_{1}=\cos(\operatorname{arctanh}(x_{2}))e^{-\operatorname{arctanh}(x_{2})}, \\
    \dot{x}_{2}=(1-x_2^{2})e^{-\operatorname{arctanh}(x_{2})},
\end{cases}
\end{equation}
which has the solution 
\begin{equation}
\begin{cases}
x_{1}(t)=\sin(\ln(1+t)), \\
x_{2}(t)=\tanh(\ln(1+t)),
\end{cases}
\end{equation}
because
\begin{equation}
\liminf \limits_{T \rightarrow +\infty} \frac{1}{T} \int_{0}^{T}\sin(\ln(1+t))dt=-1,
\end{equation}
and
\begin{equation}
\limsup \limits_{T \rightarrow +\infty} \frac{1}{T} \int_{0}^{T}\sin(\ln(1+t))dt=+1.
\end{equation}

Due to Whitney Theorem, $\tilde{B}$ is $C^{\lceil{\frac{m+5}{2}}\rceil}$ diffeomorphic to a subset of a $k$-dimensional sphere $\mathbb{S}^{k}$ $\forall k\ge 3$, and there exists $\phi\in C^{\lceil{\frac{m+5}{2}}\rceil}(\tilde{B},\mathbb{S}^{k})$ invertible such that $\tilde{B}=\phi^{-1}(\phi(\tilde{B})).$

Let $Z(t)=\phi(X(t))$. It holds that
\begin{equation}
\dot{Z}(t)=\nabla \phi (\phi^{-1}(Z(t)))\cdot f(\phi^{-1}(Z(t))).
\end{equation}

As $g$ is a $C^{\lceil{\frac{5+m}{2}}\rceil}$ diffeomorphism, the right side is Lipschitz and the ODE is well defined. Note that the right side is $C^{\lceil{\frac{3+m}{2}}\rceil}$.
\\
Furthermore, as the right end side is defined on a $C^{\lceil{\frac{3+m}{2}}\rceil}$ dimensional curve in $\mathbb{S}^{k}$, there exists a tubular neighbourhood with projection being $C^{\lceil{\frac{m+3}{2}}\rceil}$ \cite{hirsch_differential_1976} and thus the right end side extends to the tubular neighbourhood as a function $C^{\lceil{\frac{3+m}{2}}\rceil}$.
Then we can extend the right end side to the whole $\mathbb{S}^{k}$ by a smooth bump function that coincides with the identity in the curve and is supported on the tubular neighbourhood \cite{lee_introduction_2012}, in order to extend the right end side to $\mathbb{S}^{k}$.
By the same tubular neighbourhood trick and bump function trick, the right side can be extended (as a $C^{\lceil{\frac{m+3}{2}}\rceil}$ function) to $\mathbb{R}^{k+1}$, and so to a $\mathbb{D}^{k+1}$ with a slightly larger radius, which is convex and compact.\\
We propose a technique to estimate it based on Taylor expansion.\\
Then, if we compute a Taylor approximation around the vector $0$, we get
\begin{equation}
    X(t)=\phi^{-1}(0)+\nabla \phi^{-1}(0)\cdot Z(t)+H[\phi^{-1}](0)\cdot (Z(t)\otimes Z(t))+\epsilon(t).
\end{equation}
We remark that the extension $g^{-1}$ is, in general, not unique, and thus the idea is to determine the coefficients of the polynomial in a smart way, being inspired by the research line of Quadratic Model order reduction \cite{geelen_operator_2023} as described in the QGPRODE section. We furthermore remark that $Z(t)$ is ergodic with ergodic measure 
\begin{equation}
\lim\limits_{T \rightarrow +\infty} \frac{1}{T} \int_{0}^{T}l(Z(t))dt:=E_{\nu}[l]=\int_{D}ld\nu \quad \forall l\in C(D)
\end{equation}
with $D\subset \mathbb{S}^{k}.$
We underline that in general, while $\tilde{B}$ is a manifold, $B$ is not.
For example, the closure of the trajectory of the above example is $\{(\sin s, \tanh s): s \geq 0\} \cup([-1,1] \times\{1\})$ which is not even a topological manifold. That's why the analysis is performed on $\tilde{B}$. The GPRODE is then applied to the closure of the embedding of $\tilde{B}$ in $\mathbb{S}^{k}$.
\section{Computation of the distribution of $\tilde{X}(t)$ in the QGPRODE}
In this section we detail how we compute $[((\operatorname{vec}(\tilde{Z}(t)^{\otimes 2} - E(t)^{\otimes 2}))^{\otimes 2})]$.

Let 
\begin{equation} 
A:=Z Z^{\top}-E E^{\top} \in \mathbb{R}^{k+1 \times k+1}, \quad Y:=\operatorname{vec}(A) \in \mathbb{R}^{(k+1)^2}
\end{equation}
We want to compute $\mathbb{E}\left[Y^{\otimes 2}\right]=\mathbb{E}\left[Y Y^{\top}\right]$ which belongs to $\mathbb{R}^{(k+1)^{2}\times (k+1)^{2}}$.
Using column-wise vectorisation

$$
Y_{(a, b)}=A_{ab}=Z_a Z_b-E_a E_b, \quad i, j=1, \ldots, d .
$$

Hence

$$
\mathbb{E}\left[Y_{(a, b)} Y_{(c, d)}\right]=\mathbb{E}\left[\left(Z_a Z_b-E_a E_b\right)\left(Z_c Z_d-E_c E_d\right)\right] .
$$
Expanding, we get
$$\mathbb{E}\left[Y_{(a, b)} Y_{(c, d)}\right]=\mathbb{E}\left[Z_a Z_b Z_c Z_d\right]-E_a E_b \mathbb{E}\left[Z_c Z_d\right]-E_c E_d \mathbb{E}\left[Z_a Z_b\right]+E_a E_b E_c E_d.$$

The term $\mathbb{E}\left[Z_a Z_b Z_c Z_d\right]$
can be computed by exploiting independence and knowing that
\begin{equation}
\mathbb{E}\left[Z_a^3\right]=\mu_a^3+3 \mu_a V_a
\end{equation}
and
\begin{equation}
\mathbb{E}\left[Z_a^4\right]=\mu_a^4+6 \mu_a^2 V_a+3 V_a^2.
\end{equation}
\\

\end{document}